\newtheorem{theorem}{Theorem}[section]
\newtheorem{remark}{Remark}[section]
\numberwithin{figure}{section}
\numberwithin{table}{section}
\begin{document}

\baselineskip=2pc
\vspace*{.30in}

\begin{center}
{\bf Analysis and Hermite spectral approximation of diffusive-viscous wave equations in unbounded domains arising in geophysics}
\end{center}


\centerline{Dan Ling\footnote{School of Mathematics and Statistics,
Xi'an Jiaotong University, Xi'an, Shaanxi 710049, China.
E-mail: danling@xjtu.edu.cn. Research partially supported by National Natural
Science Foundation of China grant 12101486, China
Postdoctoral Science Foundation grant 2020M683446 and the High-performance
Computing Platform at Xi'an Jiaotong University.} and
Zhiping Mao\footnote{School of Mathematical Sciences, Fujian Provincial Key Laboratory of Mathematical Modeling and High-Performance Scientific Computing,  Xiamen University, Xiamen, Fujian 361005, China. E-mail:
zpmao@xmu.edu.cn. Research partially supported by the Fundamental Research Funds for the Central Universities (20720210037).}
}

\vspace{.25in}

\centerline{\bf Abstract}

\bigskip

\baselineskip=1.4pc

The diffusive-viscous wave equation (DVWE) is widely used in seismic exploration since it can explain frequency-dependent seismic reflections in a reservoir with hydrocarbons. Most of the existing numerical approximations for the DVWE are based on domain truncation with ad hoc boundary conditions. However, this would generate artificial reflections as well as truncation errors. To this end, we directly consider the DVWE in unbounded domains. We first show the existence, uniqueness, and regularity of the solution of the DVWE. We then develop a Hermite spectral Galerkin scheme and derive the corresponding error estimate showing that the Hermite spectral Galerkin approximation delivers a spectral rate of convergence provided sufficiently smooth solutions. Several numerical experiments with constant and discontinuous coefficients are provided to verify the theoretical result and to demonstrate the effectiveness of the proposed method. In particular, We verify the error estimate for both smooth and non-smooth source terms and initial conditions.  In view of the error estimate and the regularity result, we show the sharpness of the convergence rate in terms of the regularity of the source term. We also show that the artificial reflection does not occur by using the present method.

\vspace{.05in}

\vfill

\noindent {\bf Keywords: Diffusive-viscous wave equations, well-posedness, regularity, unbounded domain,
Artificial reflection, error estimates.}

\newpage

\baselineskip=2pc

\section{Introduction}

The numerical simulation of wave propagation in media with solid and fluid layers plays an important role in seismic exploration data analysis.
It has been found that seismic reflections are
frequency-dependent \cite{Geertsma1961,Brown2009,Chapman2009}.
And, the frequency-dependent reflections from a fluid-saturated  porous medium is relatively complex.
For instance, it has been shown from both laboratory analysis and field data that for the fluid-saturated layer, the resulting reflections have a higher amplitude and delayed travel-time at low-frequencies when compared with the reflections from a gas-saturated layer \cite{korneev}.
However, this important phenomenon cannot be well described by the Biot's theory \cite{biot2,biot3,goloshubin}.
Moreover, the acoustic and elastic theories are unable to effectively characterize the subsurface in fluid-saturated rocks \cite{korneev}.
Therefore, to develop more accurate theoretical models and make a wider application in practical seismic exploration, a Diffusive-Viscous Wave Equation (DVWE) was proposed in \cite{korneev} by adding a diffusive dissipation term and a viscous term to the scalar wave equation to study the connection between fluid saturation and frequency dependence of reflections and to characterize the attenuation property of the seismic wave in a fluid-saturated medium.

Recently, researchers from both scientific and industrial communities have paid much attention to the study on DVWEs. By means of the Biot's theory, Quintal et al. \cite{quintal} proposed an interlayer-flow model, which was approximated by using a finite difference scheme, to study the reflections in the low-frequency range providing a physical basis to the diffusive-viscous theory as well as explaining the spectral anomalies observed at low frequencies in thinly layered reservoirs.
To simulate the frequency-dependent seismic response of turbidite reservoirs, a seismic data-driven geological model was first applied to produce physical parameter sections, which were then used to numerically synthesize the frequency-dependent seismic response of turbidite reservoirs by simulating the DVWEs~\cite{chen}.
Zhao et al. proposed finite difference methods to simulate wave-fields of DVWEs in \cite{zhao1,zhao2} and applied the reflectivity method for the numerical modeling of DVWEs in layered medium in \cite{zhao3}, the analysis of the von Neumann stability criteria and the numerical dispersion were given in \cite{zhao1}.
A finite volume method was developed in \cite{mensah} to simulate the seismic wave propagation in a fluid-saturated medium driven by the DVWE. Recently, Ling et al. proposed the local discontinuous Galerkin method and analyzed the error estimates for the DVWEs with variable coefficients in \cite{ling2023}.
%
More work can be found in \cite{he,han2,zhao2022} and references therein.

However, most existing works only focus on the numerical schemes and the corresponding results of the stability and error estimates, there is few theoretical work concerning the existence and uniqueness of the solution for DVWEs.
Most recently, we noticed an important work provided by Han et al. in \cite{han}, in which the well-posedness and stability of DVWEs were established in bounded domains, this provided a theoretical foundation to develop numerical methods. However, no regularity result is discussed in the work.
In addition, as mentioned in \cite{carcione}, another main aspect for DVWEs is the non-reflection boundary conditions.
All aforementioned numerical simulations were performed in bounded domains, which may generate artificial reflections due to the truncation of the model. To resolve this issue, a non-split perfectly matched layer boundary condition was proposed for the DVWE to absorb the artificial reflections~\cite{zhao2022} (see also \cite{zeng2001, liu2010,rao2016}). However, the resulted problem is more complex in terms of computer implementation and it is computationally more expensive.

The aim of this work is to consider the DVWE without the truncation of the domain, i.e., we consider the DVWE directly in unbounded domains, and then establish the existence and uniqueness of the weak solution. We also discuss the regularity of the solutions in terms of the initial conditions and the source term. Furthermore, we develop an efficient Hermite spectral Galerkin scheme to approximate the solution of the DVWE.
To this end, we consider in this work the following DVWE
\begin{equation}\label{eq1}
\partial_t^2u+\alpha \partial_t u
-\partial_t{\rm div}(\beta\nabla u)-{\rm div}(\gamma^2\nabla u)=f, ~~~\bm{x}\in\mathbb{R}^d,~~t>0
\end{equation}
subjecting to following initial conditions
\begin{equation}\label{eq2}
u(\bm{x},0)=u_0(\bm{x}),~~~\partial_tu(\bm{x},0)=w_0(\bm{x}),
\end{equation}
where $d$ is the dimension in space, $u=u(\bm{x},t)$ is the wave field, $\alpha=\alpha(\bm{x})$ and $\beta=\beta(\bm{x})$ are the
diffusive and viscous attenuation parameters respectively, $\gamma=\gamma(\bm{x})$ is the wave
propagation speed in the non-dispersive medium, $f=f(\bm{x},t)$ is the source.
In this paper, we consider the cases of $d=1,2$ and assume that
\begin{equation}\label{eq:abc:cond}
    \begin{aligned}
    &\alpha(\bm{x})\in L^\infty(\Omega),\; 0< \alpha_1 \le \alpha(\bm{x}) \le \alpha_2, \;
    \beta(\bm{x})\in L^\infty(\Omega),\; 0< \beta_1 \le \beta(\bm{x}) \le \beta_2, \\
    &\gamma(\bm{x})\in L^\infty(\Omega),\; 0< \gamma_1 \le \gamma(\bm{x}) \le \gamma_2.
    \end{aligned}
\end{equation}
We apply the Hermite spectral method since it has two main advantages:
\begin{itemize}
    \item The first advantage is that the Hermite spectral method is a natural choice to deal with unbounded domain problems, see \cite{guo1999,ma2005,mao,shen2009,xiang2010}.
    \item The second one is the Hermite spectral method enjoys high accuracy provided that the solution is smooth enough.
\end{itemize}
We also derive the error estimate for the Hermite spectral Galerkin approximation showing that it delivers a spectral rate of convergence provided sufficiently smooth solution. To the best of our knowledge, this is the first attempt that DVWEs are analyzed and solved in the unbounded domains.

The remainder of this paper is organized as follows. In Section 2, we provide some preliminaries about Hermite orthogonal polynomials and functions and the corresponding approximation results. In Section 3, we give the weak form and the Hermite spectral Galerkin approximation for the problem \eqref{eq1}, and establish the existence and uniqueness of the weak solutions and discuss the regularity of the solution. We derive the error estimates in Section 4. In Section 5,
we present several numerical examples to demonstrate the convergence and effectiveness of the presented Hermite spectral Galerkin methods.
Finally, we give some concluding remarks in Section 6.

\section{Preliminary}\label{sec:pre}
In this section, we first introduce the Hermite orthogonal functions and the corresponding approximation results.

Let $\bm{x}=(x_1,\cdots,x_d)$ denote the  multi-variable in $\Omega:=\mathbb{R}^d$. For any function $u(\bm{x})\in L^2(\Omega)$, we denote its Fourier transform as $\widehat{u}(\bm{\xi})$. $|\bm{\xi}|_1,|\bm{\xi}|_2$ and $|\bm{\xi}|_\infty$ stand for the $l^1, l^2$ and $l^\infty$ norm of $\bm{\xi}$
in $\mathbb{R}^d$, respectively.
Let $\omega(\bm x)>0\;(\bm x\in \Omega)$ be a weight function, we denote $L_\omega^2(\Omega)$ the usual weighted Hilbert space with the
 inner product and norm defined by
\begin{equation*}
 (u,v)_{\Omega,\omega}= \int_{\Omega}u(\bm x)v(\bm x)\omega(\bm x)\, d \bm x,\;\; \|u\|_{\Omega,\omega}=(u,u)_{\Omega,\omega}^{\frac{1}{2}},\;
 \forall\, u,v \in L_\omega^2(\Omega).
\end{equation*}
When $\omega\equiv 1$, we will drop $\omega$ from the above notations.
The Plancherel Theorem states that
\begin{equation*}
    \|u\|_{\Omega} = \|\widehat{u}\|_{\Omega}.
\end{equation*}

We  denote by $H^{\mu}(\Omega)$ (with $\mu\ge 0$) the usual Hilbert spaces with semi-norm
\begin{equation*}
    |u|_{\mu,\Omega} = \||\bm{\xi}|_2^{\mu}  \widehat{u}\|_{\Omega}
\end{equation*}
and norm
\begin{equation*}
    \|u\|_{\mu,\Omega} = (\|u\|_{\Omega}^2 + |u|_{\mu,\Omega}^2)^{1/2}
    = (\|\widehat{u}\|_{\Omega}^2 + \| |\bm{\xi}|_2^{\mu}  \widehat{u}\|_{\Omega}^2)^{1/2}.
\end{equation*}

Let $c$ be a generic positive constant independent of any functions and of any discretization
parameters. We use the expression $A\lesssim B $ (respectively $A\gtrsim B $) to mean that $A \leqslant cB $ (respectively $A\geqslant cB $), and use the expression $A\cong B $ to mean that $A \lesssim B \lesssim A$. We will also drop $\Omega$ or $\mathbb{R}^d$ from the notations if no confusion arises.

We first introduce the orthonormal Hermite polynomials $\{H_n(x)\}$ in $\mathbb{R}$, which are defined by the three-term recurrence relation:
\begin{equation*}
\begin{aligned}
    &H_{n+1}(x) = x\sqrt{\frac{2}{n+1}} H_n(x)- \sqrt{\frac{n}{n+1}}H_{n-1}(x),\; n\geq 1,\\
    &H_0(x) = \pi^{-1/4}, \quad H_1(x) = \sqrt{2}\pi^{-1/4}x.
\end{aligned}
\end{equation*}
They are mutually orthogonal with respect to the weight function $\omega(x) = {\rm e}^{-x^2}$, i.e.,
\begin{equation}\label{Her-poly}
    \int_{-\infty}^{\infty} H_m(x)H_n(x)\omega(x)dx = \delta_{mn},
\end{equation}
and it satisfies that
\begin{equation*}
    H'_{n}(x) = \sqrt{2n} H_{n-1}(x),\; n\geq 1.
\end{equation*}


Denote $P_N(x)$ the space of the polynomials of degree at most $N$, and we have
\begin{equation*}\label{eq4}
P_N(x)=\text{span}\big\{H_0(x),H_1(x),\cdots,H_N(x)\big\}.
\end{equation*}
Let  $P_N^d$ be the $d$ dimension tensor of $P_N$.
We define the orthogonal projection $\bm{\Pi}_{\bm N}: L_{\bm \omega}^2(\mathbb{R}^d) \rightarrow P_N^d$,
\begin{equation}\label{L2-pro-multi}
    \int_{\mathbb{R}^d} (\bm{\Pi}_{N}u - u)v_N {\bm \omega} (\bm x)dx  =0,\quad \forall \, v_N \in P_N^d,
\end{equation}
where ${\bm \omega}(\bm x) = \prod_{j=1}^d \omega(x_j).$

Let us introduce the Hermite orthogonal functions
\begin{align*}
    \phi_j(x) = {\rm e}^{-x^2/2}H_j(x),\, j =0,1,\ldots,
\end{align*}
which form an orthogonal basis in $L^2(\mathbb{R})$, i.e.,
\begin{equation*}
    \int_{-\infty}^{\infty} \phi_m(x)\phi_n(x)dx = \delta_{mn}
\end{equation*}
according to \eqref{Her-poly}.
Let
\begin{equation*}
\mathcal{P}_N(x)=\big\{{\rm e}^{-\frac{x^2}{2}}v~|~v\in P_N(x)\big\} = \text{span}\big\{\phi_0(x), \phi_1(x),\cdots,\phi_N(x)\big\},
\end{equation*}
and denote $V_N$ the $d$ dimension tensor product of $\mathcal{P}_N$.
We next consider approximations by multivariate Hermite functions.
Note that for any $u\in L^2(\mathbb{R}^d)$, we have $u \bm{\omega}^{-1/2}\in L_{\bm{\omega}}^2(\mathbb{R}^d)$.
Define
\begin{equation}\label{PiNhat}
    \hat{\bm{\Pi}}_N u := \bm{\omega}^{1/2} \bm{\Pi}_N(u\bm{\omega}^{-1/2})\in {V_N}.
\end{equation}
Then for $u\in L^2(\mathbb{R}^d)$, we derive immediately from  \eqref{L2-pro-multi} that
\begin{equation*}
    \int_{\mathbb{R}^d} ( \hat{\bm{\Pi}}_{N}u - u)v_N d\bm x=0,\quad \forall \, {v_N \in V_N}.
\end{equation*}

We introduce the operator $\hat{\partial}_{x_j}={\partial}_{x_j}+x_j,$
which satisfies
\begin{equation*}
   \omega^{-1/2}(x_j)\hat{\partial}_{x_j}u(x_j) = \partial_{x_j} \big[\omega^{-1/2}(x_j)u(x_j)\big],
\end{equation*}
and denote $\hat{\partial}_{\bm x}:= \prod_{j=1}^d \hat{\partial}_{x_j}$,  $\hat{\bm{\partial}}_{\bm x}^{\bm k}:= \prod_{j=1}^d \hat{\partial}_{x_j}^{k_j}$.
Furthermore, we define the following weighted Sobelev space
\begin{equation*}
    \hat{B}^m(\mathbb{R}^d) := \big\{u: \hat{\partial}_{\bm x}^{\bm{k}} u\in L^2(\mathbb{R}^d),\;0\le |\bm k|_1\le m  \big\}, \;\forall m\in \mathbb{N},
\end{equation*}
equipped with the norm and semi-norm
\begin{equation*}
    \|u\|_{\hat{B}^m(\mathbb{R}^d)} = \Big(\sum_{0\le |\bm k|_1 \le m} \|\hat{\partial}_{\bm x}^{\bm{k}} u\|^2 \Big)^{\frac{1}{2}}, \quad
    |u|_{\hat{B}^m(\mathbb{R}^d)} = \Big(\sum_{j=1}^d \|\hat{\partial}_{ x_j}^{m} u\|^2 \Big)^{\frac{1}{2}}.
\end{equation*}
We present below the approximation result for the errors measured in the usual Hilbert space~\cite{mao}.
\begin{theorem}\label{thmErrEstiHerFunMultiHat}
For any $u \in \hat{B}^m(\mathbb{R}^d)$ with $m\ge 1$, we have
\begin{equation}\label{eqnErrEstiHerMultiH}
    \|\bm{\hat\Pi}_{N}u-u\|_{H^\mu(\mathbb{R}^d)} \lesssim N^{(\mu-m)/2}|u|_{\hat B^m(\mathbb{R}^d)},\quad 0\le \mu\le m.
\end{equation}
%
\end{theorem}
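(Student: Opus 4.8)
The plan is to diagonalize both the projection and the two relevant norms in the Hermite-function basis, so that the estimate reduces to an elementary tail bound. Since $\{\phi_{\bm k}\}_{\bm k}$, with $\phi_{\bm k}(\bm x):=\prod_{j=1}^d\phi_{k_j}(x_j)$, is an orthonormal basis of $L^2(\mathbb R^d)$, I write $u=\sum_{\bm k}\hat u_{\bm k}\phi_{\bm k}$ with $\hat u_{\bm k}=(u,\phi_{\bm k})$. Because $V_N$ is spanned by those $\phi_{\bm k}$ with $|\bm k|_\infty\le N$ and $\bm{\hat\Pi}_N$ is the $L^2$-orthogonal projection onto $V_N$, the error is exactly the tail
\begin{equation*}
\bm{\hat\Pi}_N u-u=-\sum_{|\bm k|_\infty>N}\hat u_{\bm k}\phi_{\bm k}.
\end{equation*}
I will also use that the Hermite functions are eigenfunctions of the Fourier transform, $\widehat{\phi_{\bm k}}=(-i)^{|\bm k|_1}\phi_{\bm k}$, so that $V_N$ is Fourier-invariant and $\bm{\hat\Pi}_N$ commutes with the Fourier transform; this is what lets me treat the Fourier-based $H^\mu$ norm through the Hermite expansion.

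First I would express the $\hat B^m$ seminorm through the coefficients. From the identity $\omega^{-1/2}\hat\partial_{x}u=\partial_x(\omega^{-1/2}u)$ and $H_n'=\sqrt{2n}\,H_{n-1}$ one gets the lowering relation $\hat\partial_{x}\phi_n=\sqrt{2n}\,\phi_{n-1}$, hence $\hat\partial_{x_j}^m\phi_{\bm k}=2^{m/2}\sqrt{k_j!/(k_j-m)!}\,\phi_{\bm k-m\bm e_j}$ (and $0$ if $k_j<m$), where $\bm e_j$ is the $j$-th unit multi-index. By orthonormality this yields
\begin{equation*}
|u|_{\hat B^m}^2=\sum_{j=1}^d\|\hat\partial_{x_j}^m u\|^2
=\sum_{\bm k}\Big(\sum_{j=1}^d 2^m\frac{k_j!}{(k_j-m)!}\Big)|\hat u_{\bm k}|^2,
\end{equation*}
and since $k_j!/(k_j-m)!=k_j(k_j-1)\cdots(k_j-m+1)\cong k_j^m$ for $k_j\gtrsim m$, the weights are comparable to $\sum_j k_j^m$ on all but finitely many low modes.

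The crux, and the step I expect to be the main obstacle, is to control the $H^\mu$ seminorm of the tail by the same weighted sums of coefficients, namely to prove
\begin{equation*}
|v|_\mu^2=\big\||\bm\xi|_2^{\mu}\widehat v\,\big\|^2\lesssim\sum_{\bm k}|\bm k|_1^{\mu}\,|\hat v_{\bm k}|^2 ,\qquad v=\sum_{\bm k}\hat v_{\bm k}\phi_{\bm k}.
\end{equation*}
The difficulty is that $\|\cdot\|_{H^\mu}$ is defined by the Fourier multiplier $|\bm\xi|_2^\mu$, which is \emph{not} diagonal in the Hermite basis, so the seminorm cannot be read off coefficientwise. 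To overcome this I would use the Fourier-invariance above to replace $|v|_\mu$ by the weighted $L^2$ quantity $\||\bm\xi|_2^\mu \sum_{\bm k}(-i)^{|\bm k|_1}\hat v_{\bm k}\phi_{\bm k}\|$, and then exploit that multiplication by each $\xi_j$ is tridiagonal in the Hermite basis (from the three-term recurrence, $\xi_j\phi_{\bm k}=\sqrt{(k_j+1)/2}\,\phi_{\bm k+\bm e_j}+\sqrt{k_j/2}\,\phi_{\bm k-\bm e_j}$). For even integer $\mu$ the symbol $|\bm\xi|_2^{\mu}$ is a polynomial, so multiplication by it is a banded operator whose entries grow like $|\bm k|_1^{\mu/2}$; a Schur-type estimate then gives the displayed bound, and the case of general $0\le\mu\le m$ follows by space interpolation. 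Equivalently, one may phrase this through the harmonic oscillator $\mathcal A=-\Delta+|\bm x|^2$, whose eigenfunctions are the $\phi_{\bm k}$ with eigenvalues $2|\bm k|_1+d$, together with the comparison $|v|_\mu\lesssim\|\mathcal A^{\mu/2}v\|$.

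Finally I would assemble the tail bound. Applying the key comparison to $v=\bm{\hat\Pi}_N u-u$, only indices with $|\bm k|_\infty>N$ occur; picking $j^\ast$ with $k_{j^\ast}=|\bm k|_\infty>N$ and using $\mu-m\le0$ gives
\begin{equation*}
|\bm k|_1^{\mu}\le d^{\mu}k_{j^\ast}^{\mu}=d^{\mu}k_{j^\ast}^{\mu-m}k_{j^\ast}^{m}\le d^{\mu}N^{\mu-m}\,k_{j^\ast}^{m}\lesssim N^{\mu-m}\sum_{j=1}^d k_j^m .
\end{equation*}
Combining this with the comparison and the formula for $|u|_{\hat B^m}^2$ yields $|\bm{\hat\Pi}_N u-u|_\mu^2\lesssim N^{\mu-m}|u|_{\hat B^m}^2$. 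The $L^2$ part is dominated: $\|\bm{\hat\Pi}_N u-u\|^2=\sum_{|\bm k|_\infty>N}|\hat u_{\bm k}|^2\le N^{-m}\sum_{\bm k}\big(\sum_j k_j^m\big)|\hat u_{\bm k}|^2\lesssim N^{-m}|u|_{\hat B^m}^2\le N^{\mu-m}|u|_{\hat B^m}^2$ since $\mu\ge0$. Adding the two contributions gives $\|\bm{\hat\Pi}_N u-u\|_{H^\mu}^2\lesssim N^{\mu-m}|u|_{\hat B^m}^2$, which is the claimed estimate after taking square roots.
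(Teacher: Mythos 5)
The paper itself gives no proof of this theorem: it is imported verbatim from Mao and Shen \cite{mao}, so the comparison is with the standard argument of that reference, which your proposal reconstructs correctly and along essentially the same lines. Your ingredients are exactly the right ones: $\hat{\bm{\Pi}}_N$ is precisely truncation of the tensor Hermite expansion to $|\bm k|_\infty\le N$; the lowering relation $\hat\partial_x\phi_n=\sqrt{2n}\,\phi_{n-1}$ (your derivation from $H_n'=\sqrt{2n}H_{n-1}$ is correct) diagonalizes the $\hat B^m$-seminorm with weights $2^m k_j!/(k_j-m)!$; and you correctly identify the crux, namely that the Fourier multiplier $|\bm\xi|_2^\mu$ is not diagonal in the Hermite basis, resolving it through the Fourier-eigenfunction property $\widehat{\phi_{\bm k}}=c(-i)^{|\bm k|_1}\phi_{\bm k}$ together with either the banded structure of polynomial multiplication plus a Schur bound and interpolation, or the harmonic-oscillator comparison $|v|_\mu\lesssim\|\mathcal{A}^{\mu/2}v\|$ with $\mathcal{A}=-\Delta+|\bm x|^2$. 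Both remedies are viable (for the latter, L\"owner--Heinz gives $0\le\mu\le 2$ directly, integer cases by commutator computation, general $\mu$ by interpolation; for the former, one needs the standard fact that weighted-$\ell^2$ coefficient spaces and the $H^\mu$ scale interpolate compatibly). The final tail estimate and the $L^2$ part are assembled correctly.

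Two small repairs are needed, neither fatal. First, your displayed key comparison $|v|_\mu^2\lesssim\sum_{\bm k}|\bm k|_1^\mu|\hat v_{\bm k}|^2$ is false as stated: for $v=\phi_{\bm 0}$ the left side is positive while the right side vanishes. The correct weight is $(2|\bm k|_1+d)^\mu$, equivalently $(1+|\bm k|_1)^\mu$, which is what the oscillator eigenvalues actually give; since you apply the bound only to the tail, where every mode satisfies $|\bm k|_1>N\ge 1$ so that $|\bm k|_1\cong 1+|\bm k|_1$, the application survives, but the lemma should be stated with the corrected weight. Second, the comparability $k_j!/(k_j-m)!\cong k_j^m$ fails for low modes (the left side is zero when $k_j<m$); you only invoke it at the index $j^\ast$ with $k_{j^\ast}>N$, which is legitimate provided $N\gtrsim m$ (say $N\ge 2m$, so that $(k_{j^\ast}-m+1)^m\ge(k_{j^\ast}/2)^m$). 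This restriction is intrinsic to the statement rather than a defect of your proof: for $N<m-1$ the function $u=\phi_{(m-1,0,\dots,0)}$ has $|u|_{\hat B^m(\mathbb{R}^d)}=0$ while $\hat{\bm{\Pi}}_Nu\ne u$, so the estimate with a seminorm on the right can only hold for $N$ bounded below in terms of $m$, as is implicit in results of this type.
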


\section{Well-posedness and regularity}\label{sec:wellposedness}

The existence and uniqueness of the solution of the DVWE in a bounded domain with mixed boundary conditions are given in \cite{han}. However, there is no result regarding the well-posedness of the diffusive-viscous wave equation in the unbounded domain. In this section we would like to show the existence and uniqueness of the weak solution in the unbounded domain. Moreover, we also discuss the regularity of the solution in terms of the initial conditions and the source term $f$.

\subsection{Existence and uniqueness of the weak solution}\label{subsec:wellposedness}
We first show the existence and uniqueness of the solution of the DVWE. Here We use more or less the standard arguments in \cite[Section 7.2]{Evans}.

Let $H^1(\mathbb{R}^d)$ be the usual Sobolev space. By using the integration by parts, we have the weak form of the problem \eqref{eq1}: For a.e. $t\in(0,T)$, find $u(t),~ {\partial_tu}(t)\in H^1(\mathbb{R}^d), ~ {\partial_t^2 u}(t)\in H^{-1}(\mathbb{R}^d)$, such that
\begin{equation}\label{eq:wk}
A(u,v) = (f,v),\quad \forall v\in H^1(\mathbb{R}^d)
\end{equation}
with $u(\bm{x},0)=u_0(\bm{x}),\;\partial_t u(\bm{x},0)=w_0(\bm{x}),$ where $H^{-1}(\mathbb{R}^d)$ is the dual space of $H^1(\mathbb{R}^d)$, and
\begin{equation}\label{eq22}
A(u,v):=(\partial_t^2u,v)+(\alpha \partial_tu,v)+(\beta\partial_t\nabla u,\nabla v)+(\gamma^2\nabla u,\nabla v).
\end{equation}

The Hermite spectral Galerkin approximation to \eqref{eq:wk} is to find $u_N(t),\partial_tu_N(t)\in V_N$, such that
\begin{equation}\label{eq:sg}
A(u_N,v) = (f,v), \quad \forall v\in V_N
\end{equation}
with $u_N(\bm{x},0)=\hat{\bm{\Pi}}_N u_0,\partial_t u_N(\bm{x},0)=\hat{\bm{\Pi}}_N w_0$, where $\hat{\bm{\Pi}}_N$ is the $L^2$-projection defined in \eqref{PiNhat}.

By using the standard arguments for ordinary differential equations (see \cite[Theorem 25.3]{Robinson}), we have that the Galerkin approximation \eqref{eq:sg} admits a unique solution.
Before we prove the well-posedness of the continuous problem \eqref{eq:wk}, we begin by establishing two results on the continuous dependence of the Hermite spectral Galerkin approximation, which will be used to study the well-posedness of the weak problem \eqref{eq:wk}.
\begin{theorem}\label{thm:SG:est}
Assume $u_0\in H^1(\mathbb{R}^d),\; w_0\in L^2(\mathbb{R}^d),\; f\in L^2(0,T;H^{-1}(\mathbb{R}^d))$, then $u_N(t)$ satisfies the following two estimates:
\begin{equation}\label{eq:stab3}
\begin{aligned}
\|\partial_t u_N\|_{L^{\infty}(0,T;L^2(\mathbb{R}^d))} + \|\partial_t u_N\|_{L^2(0,T;H^1(\mathbb{R}^d))} + \|u_N\|_{L^\infty(0,T; H^1(\mathbb{R}^d))}
\le \bar{C}(u_0, w_0, f),
\end{aligned}
\end{equation}
\begin{equation}\label{eq:stab4}
\begin{aligned}
\|\partial_t^2 u_N\|_{L^2(0,T;H^{-1}(\mathbb{R}^d))}
\le \tilde{C}(u_0, w_0, f),
\end{aligned}
\end{equation}
where $\bar{C}$ and $\tilde{C}$ are two constants depending on $\|f\|_{L^2(0,T; H^{-1}(\mathbb{R}^d))}$ and $\|u_0\|_{H^1(\mathbb{R}^d)},\; \|w_0\|_{L^2(\mathbb{R}^d)}$,  but independent of $t$ and $N$.
\end{theorem}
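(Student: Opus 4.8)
The plan is to use the standard energy method for second-order evolution equations, taking the discrete velocity $\partial_t u_N$ as the test function in the Galerkin scheme \eqref{eq:sg}. Since $u_N(t)\in V_N$ for every $t$ and the ODE theory invoked above guarantees enough time-regularity, its time derivatives also lie in $V_N$, so $v=\partial_t u_N$ is admissible. Substituting into \eqref{eq:sg} and \eqref{eq22} gives
\begin{equation*}
(\partial_t^2 u_N,\partial_t u_N)+(\alpha\partial_t u_N,\partial_t u_N)+(\beta\partial_t\nabla u_N,\partial_t\nabla u_N)+(\gamma^2\nabla u_N,\partial_t\nabla u_N)=(f,\partial_t u_N).
\end{equation*}
The first and last terms on the left are exact time derivatives, $\tfrac12\frac{d}{dt}\|\partial_t u_N\|^2$ and $\tfrac12\frac{d}{dt}\|\gamma\nabla u_N\|^2$ (here one uses that $\alpha,\beta,\gamma$ are time-independent), while the two middle terms are nonnegative and, by the lower bounds in \eqref{eq:abc:cond}, satisfy $(\alpha\partial_t u_N,\partial_t u_N)\ge\alpha_1\|\partial_t u_N\|^2$ and $(\beta\partial_t\nabla u_N,\partial_t\nabla u_N)\ge\beta_1\|\partial_t\nabla u_N\|^2$. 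This exhibits the dissipative structure that drives both estimates.

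For the right-hand side I would bound $(f,\partial_t u_N)\le\|f\|_{H^{-1}}\|\partial_t u_N\|_{H^1}$ and apply Young's inequality, splitting $\|\partial_t u_N\|_{H^1}^2=\|\partial_t u_N\|^2+\|\partial_t\nabla u_N\|^2$ so that the resulting $\tfrac{\epsilon}{2}\|\partial_t u_N\|_{H^1}^2$ can be absorbed into the dissipative terms $\alpha_1\|\partial_t u_N\|^2+\beta_1\|\partial_t\nabla u_N\|^2$ for $\epsilon$ small enough. After absorption one is left with an inequality of the form
\begin{equation*}
\frac{d}{dt}\big(\|\partial_t u_N\|^2+\|\gamma\nabla u_N\|^2\big)+c\big(\|\partial_t u_N\|^2+\|\partial_t\nabla u_N\|^2\big)\lesssim\|f\|_{H^{-1}}^2.
\end{equation*}
Integrating over $(0,t)$ and taking the supremum over $t\in(0,T)$ controls simultaneously $\|\partial_t u_N\|_{L^\infty(0,T;L^2)}$, the dissipation norm $\|\partial_t u_N\|_{L^2(0,T;H^1)}$, and $\|\nabla u_N\|_{L^\infty(0,T;L^2)}$ via $\|\gamma\nabla u_N\|\ge\gamma_1\|\nabla u_N\|$; the full $\|u_N\|_{L^\infty(0,T;H^1)}$ then follows by writing $u_N(t)=u_N(0)+\int_0^t\partial_t u_N\,ds$ to bound $\|u_N\|_{L^2}$. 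The initial contributions $\|\partial_t u_N(0)\|=\|\hat{\bm{\Pi}}_N w_0\|$ and $\|\nabla u_N(0)\|=\|\nabla\hat{\bm{\Pi}}_N u_0\|$ are bounded by $\|w_0\|$ and $\|u_0\|_{H^1}$ using the $L^2$- and $H^1$-stability of $\hat{\bm{\Pi}}_N$, giving \eqref{eq:stab3} with $\bar C$ independent of $N$ and $t$.

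For the second estimate \eqref{eq:stab4} I would estimate $\partial_t^2 u_N$ in $H^{-1}$ by duality. For any $v\in H^1(\mathbb{R}^d)$, decompose $v=\hat{\bm{\Pi}}_N v+(v-\hat{\bm{\Pi}}_N v)$; since $\partial_t^2 u_N\in V_N$ and $v-\hat{\bm{\Pi}}_N v$ is $L^2$-orthogonal to $V_N$, we have $(\partial_t^2 u_N,v)=(\partial_t^2 u_N,\hat{\bm{\Pi}}_N v)$. Using \eqref{eq:sg} with the admissible test function $\hat{\bm{\Pi}}_N v$ to rewrite $(\partial_t^2 u_N,\hat{\bm{\Pi}}_N v)$ in terms of $f$, $\alpha\partial_t u_N$, $\beta\partial_t\nabla u_N$ and $\gamma^2\nabla u_N$, then bounding each term by Cauchy--Schwarz together with the $H^1$-stability $\|\hat{\bm{\Pi}}_N v\|_{H^1}\lesssim\|v\|_{H^1}$, yields the pointwise-in-time bound $\|\partial_t^2 u_N\|_{H^{-1}}\lesssim\|f\|_{H^{-1}}+\|\partial_t u_N\|+\|\partial_t\nabla u_N\|+\|\nabla u_N\|$. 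Squaring, integrating over $(0,T)$ and invoking \eqref{eq:stab3} produces \eqref{eq:stab4}.

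The main obstacle is the $H^1$-stability of the projection $\hat{\bm{\Pi}}_N$, needed both to control $\nabla u_N(0)$ in the energy estimate and to bound $\|\hat{\bm{\Pi}}_N v\|_{H^1}$ in the duality argument; this does not follow from $L^2$-orthogonality alone and must be extracted from the Hermite approximation theory underlying Theorem \ref{thmErrEstiHerFunMultiHat}, equivalently from the commutation of $\hat{\bm{\Pi}}_N$ with the operators $\hat\partial_{x_j}=\partial_{x_j}+x_j$. A secondary point is keeping the absorption parameter $\epsilon$ and the coercivity constants $\alpha_1,\beta_1,\gamma_1$ tracked so that $\bar C$ and $\tilde C$ depend only on the data norms and not on $N$ or $t$; this is routine once the dissipative structure is isolated.
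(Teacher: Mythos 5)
Your proposal is correct and follows the same energy-method skeleton as the paper (test with $v=\partial_t u_N$ in \eqref{eq:sg}, absorb the $\|f\|_{H^{-1}}\|\partial_t u_N\|_{H^1}$ term by Young's inequality using the lower bounds $\alpha_1,\beta_1$, control the initial data through the projection bounds $\|\hat{\bm{\Pi}}_N w_0\|\le\|w_0\|$ and $\|\hat{\bm{\Pi}}_N u_0\|_{H^1}\le\|u_0\|_{H^1}$, then pass to \eqref{eq:stab4} via the equation), but it diverges in one sub-step in a way worth noting. To recover $\|u_N\|_{L^\infty(0,T;L^2)}$ --- the step the paper motivates by the absence of a Poincar\'e inequality on $\mathbb{R}^d$ --- the paper performs a second energy estimate with $v=u_N$, integrating $\int_0^t(\partial_\tau^2 u_N,u_N)\,d\tau$ by parts in time; as written this leaves $\|u_N\|_{L^2(0,T;H^1)}^2$ on the right-hand side, so closing it strictly requires a Gronwall step that the paper elides. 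Your route via the fundamental theorem of calculus, $u_N(t)=u_N(0)+\int_0^t\partial_t u_N\,ds$, gets the same bound directly from \eqref{eq:stab1}-type control of $\|\partial_t u_N\|_{L^\infty(0,T;L^2)}$, at the harmless price of a constant growing linearly in $T$ (the theorem only claims independence of $t$ and $N$); this is simpler and avoids the circularity. For \eqref{eq:stab4}, your explicit duality argument $(\partial_t^2 u_N,v)=(\partial_t^2 u_N,\hat{\bm{\Pi}}_N v)$ followed by the Galerkin equation is the rigorous version of what the paper writes tacitly, and you are right that both proofs ultimately rest on $H^1$-type stability of $\hat{\bm{\Pi}}_N$: the paper simply asserts $\|\hat{\bm{\Pi}}_N u_0\|_{H^1}\le\|u_0\|_{H^1}$ without proof, and this is indeed nontrivial for the Hermite projection, since $\partial_x$ does not commute with $\hat{\bm{\Pi}}_N$ (only the ladder operators $\hat\partial_{x_j}=\partial_{x_j}+x_j$ interact cleanly with the Hermite basis, which is why the natural stability statement lives in $\hat B^1$ rather than $H^1$); flagging this as the genuine obstacle, rather than treating it as routine, is more careful than the paper itself.
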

\begin{proof}
By taking $v = \partial_t u_N(t)$ in \eqref{eq:sg} and using the Cauchy-Schwarz and Young inequalities, we have for any $\varepsilon>0$,
\begin{equation*}
\begin{aligned}
\frac{1}{2}\frac{d}{dt}\left(\|\partial_t u_N(t)\|_{L^2(\mathbb{R}^d)}^2 + \|\gamma\nabla u_N(t)\|_{L^2(\mathbb{R}^d)}^2 \right) +\|\alpha^{1/2}\partial_t u_N(t)\|_{L^2(\mathbb{R}^d)}^2 + \|\beta^{1/2}\nabla(\partial_t u_N)(t)\|_{L^2(\mathbb{R}^d)}^2 & \\
= \left ( f, \partial_t u_N(t)\right )
\le \|f\|_{H^{-1}(\mathbb{R}^d)}\cdot \|\partial_t u_N(t)\|_{H^1(\mathbb{R}^d)}
\le \varepsilon \|\partial_t u_N(t)\|_{H^1(\mathbb{R}^d)}^2 + \frac{1}{4\varepsilon} \|f\|_{H^{-1}(\mathbb{R}^d)}^2. &
\end{aligned}
\end{equation*}
Let $\varepsilon = \min\{\alpha, \beta\}/2$, we obtain  from  the first two conditions in \eqref{eq:abc:cond} that
\begin{equation*}
\begin{aligned}
\frac{d}{dt}\left(\|\partial_t u_N(t)\|_{L^2(\mathbb{R}^d)}^2 + \|\gamma\nabla u_N(t)\|_{L^2(\mathbb{R}^d)}^2 \right) +C(\alpha, \beta, \varepsilon)\|\partial_t u_N(t)\|_{H^1(\mathbb{R}^d)}^2
\le \frac{1}{4\varepsilon} \|f\|_{H^{-1}(\mathbb{R}^d)}^2,
\end{aligned}
\end{equation*}
where $C(\alpha, \beta, \varepsilon)$ is a positive constant independent of $N$ and $t$.
Integrating the above inequality from 0 to $t$, we have
\begin{equation*}
\begin{aligned}
\|\partial_t u_N(t)\|_{L^2(\mathbb{R}^d)}^2 + \|\gamma\nabla u_N(t)\|_{L^2(\mathbb{R}^d)}^2  + C(\alpha, \beta, \varepsilon)\int_0^t \|\partial_t u_N(\tau)\|_{H^1(\mathbb{R}^d)}^2 d\tau &\\
\le \|w_0\|_{L^2(\mathbb{R}^d)}^2 + \|\gamma u_0\|_{H^1(\mathbb{R}^d)}^2
+ \frac{1}{4\varepsilon} \|f\|_{L^2(0,T;H^{-1}(\mathbb{R}^d))}^2. &
\end{aligned}
\end{equation*}
Here we use the estimate
\begin{equation*}
    \|\hat{\bm{\Pi}}_N u_0\|_{H^1(\mathbb{R}^d)} \le \| u_0\|_{H^1(\mathbb{R}^d)},
    \quad
     \|\hat{\bm{\Pi}}_N w_0\|_{L^2(\mathbb{R}^d)} \le \| w_0\|_{L^2(\mathbb{R}^d)}.
\end{equation*}
Therefore, in view of the third condition in \eqref{eq:abc:cond}, we have
\begin{equation}\label{eq:stab1}
\begin{aligned}
\|\partial_t u_N\|_{L^{\infty}(0,T;L^2(\mathbb{R}^d))} + \|\nabla u_N\|_{L^\infty(0,T; L^2(\mathbb{R}^d))}+ \|\partial_t u_N\|_{L^2(0,T; H^1(\mathbb{R}^d))}
\le C_1(u_0, w_0, f),
\end{aligned}
\end{equation}
where $C_1(u_0, w_0, f)$ is a constant depending on $\|f\|_{L^2(0,T; H^{-1}(\mathbb{R}^d))}$ and $\|u_0\|_{H^1(\mathbb{R}^d)},\; \|w_0\|_{L^2(\mathbb{R}^d)}$.

Here we do not have the Poincar\'{e} inequality for the space direction, so we need to further give the estimate for $\|u_N\|_{L^\infty(0,T;L^2(\mathbb{R}^d))}$.
By taking $v = u_N(t)$ in \eqref{eq:sg} and using the Cauchy-Schwartz and Young inequalities, we have
\begin{equation*}
\begin{aligned}
\frac{1}{2}\frac{d}{dt}\left(\|\alpha^{1/2} u_N(t)\|_{L^2(\mathbb{R}^d)}^2 + \|\beta^{1/2} \nabla u_N(t)\|_{L^2(\mathbb{R}^d)}^2 \right) + \|\gamma\nabla u_N(t)\|_{L^2(\mathbb{R}^d)}^2 = \left ( f-{\partial_t^2 u_N}(t), u_N(t)\right ) & \\
\le \|f\|_{H^{-1}(\mathbb{R}^d)} \|u_N(t)\|_{H^1(\mathbb{R}^d)} - \left ( {\partial_t^2 u_N}(t), u_N(t)\right ) &\\
\le\frac{1}{2}\big(\|u_N(t)\|_{H^1(\mathbb{R}^d)}^2+\|f\|_{H^{-1}(\mathbb{R}^d)}^2\big) -\left ( {\partial_t^2 u_N}(t), u_N(t)\right ). &
\end{aligned}
\end{equation*}
Integrating the above inequality from 0 to $t$ and using integral by parts with respect to $\tau$ for $\int_0^t\left({\partial_\tau^2 u_N}(\tau), u_N(\tau)\right)d\tau$, we have
\begin{equation*}
\begin{aligned}
&\|\alpha^{1/2} u_N(t)\|_{L^2(\mathbb{R}^d)}^2 + \|\beta^{1/2} \nabla u_N(t)\|_{L^2(\mathbb{R}^d)}^2 + 2\int_0^t \|\gamma\nabla u_N(\tau)\|_{L^2(\mathbb{R}^d)}^2 d\tau \\
\le & \|u_N(t)\|_{L^{2}(0,T;H^1(\mathbb{R}^d))}^2 + \|f\|_{L^{2}(0,T;H^{-1}(\mathbb{R}^d))}^2 + 2\|\partial_t u_N(t)\|_{L^2(0,T;L^2(\mathbb{R}^d))}^2 + {2\int_{\Omega}\left(u_0w_0 - u_N(t)\partial_t u_N(t) \right)d{\bm x}}  \\
\le & \|u_N(t)\|_{L^{2}(0,T;H^1(\mathbb{R}^d))}^2 + \|f\|_{L^{2}(0,T;H^{-1}(\mathbb{R}^d))}^2 + 2\|\partial_t u_N(t)\|_{L^2(0,T;L^2(\mathbb{R}^d))}^2 + 2\|u_0\|_{L^2(\mathbb{R}^d)} \|w_0\|_{L^2(\mathbb{R}^d)} \\
& +\epsilon\|u_N(t)\|_{L^2(\mathbb{R}^d)}^2 + \frac{1}{\epsilon}\|\partial_t u_N(t)\|_{L^2(\mathbb{R}^d)}^2,
\end{aligned}
\end{equation*}
where the Cauchy-Schwarz and Young inequalities are used for the last inequality.
Let $\epsilon = \min(\alpha)/2$, then the following estimate
%
\begin{equation}\label{eq:stab2}
\begin{aligned}
\|u_N\|_{L^\infty(0,T;L^2(\mathbb{R}^d))}^2 + \|\nabla u_N\|_{L^\infty(0,T;L^2(\mathbb{R}^d))}^2 +  \|\nabla u_N\|_{L^2(0,T;L^2(\mathbb{R}^d))}^2
\le C_2(u_0, w_0, f)
\end{aligned}
\end{equation}
follows based on the estimate \eqref{eq:stab1},
where $C_2(u_0, w_0, f)$ is a constant depending on $\|f\|_{L^2(0,T;H^{-1}(\mathbb{R}^d))}$ and $\|u_0\|_{H^1(\mathbb{R}^d)},\; \|w_0\|_{L^2(\mathbb{R}^d)}$.
Therefore, the estimate \eqref{eq:stab3} follows from \eqref{eq:stab1} and \eqref{eq:stab2}.
Furthermore,
we have from \eqref{eq:sg} that
\begin{equation*}
\begin{aligned}
\|\partial_t^2 u_N(t)\|_{H^{-1}(\mathbb{R}^d)}
\le &\|\alpha^{1/2} \partial_t u_N\|_{H^{-1}(\mathbb{R}^d)} + \|\beta^{1/2} \partial_t \nabla u_N\|_{H^{-1}(\mathbb{R}^d)} + \|\gamma \nabla u_N\|_{L^2(\mathbb{R}^d)} + \|f\|_{H^{-1}(\mathbb{R}^d)}\\
\le & \|\alpha^{1/2} \partial_t u_N\|_{L^2(\mathbb{R}^d)} + \|\beta^{1/2} \partial_t \nabla u_N\|_{L^2(\mathbb{R}^d)} + \|\gamma \nabla u_N\|_{L^2(\mathbb{R}^d)} + \|f\|_{H^{-1}(\mathbb{R}^d)}.
\end{aligned}
\end{equation*}
Then the estimate \eqref{eq:stab4} holds by integrating the above equation from 0 to $t$  and using the estimate \eqref{eq:stab3}.
\end{proof}

We are now able to show the existence and uniqueness of the continuous problem \eqref{eq:wk}. We use more or less the standard compactness arguments similar as that used in \cite{han} based on the properties of the Hermite spectral Galerkin approximation.

We state the {main result} of this section concerning the \emph{well-posedness} of the weak problem \eqref{eq:wk} as follows:
\begin{theorem}
Assume $u_0\in H^1(\mathbb{R}^d),\; w_0\in L^2(\mathbb{R}^d),\; f\in L^2(0,T;H^{-1}(\mathbb{R}^d))$, then the weak problem \eqref{eq:wk} admits a unique solution $u\in L^{\infty}(0,T; H^1(\mathbb{R}^d))$ and $\partial_t u\in L^{\infty}(0,T;L^2(\mathbb{R}^d)) \cap L^{2}(0,T;H^1(\mathbb{R}^d)),\partial_{t}^2 u\in L^{2}(0,T; H^{-1}(\mathbb{R}^d))$ satisfying
\begin{equation}\label{eq:conti:stab3}
\begin{aligned}
\|\partial_t u\|_{L^{\infty}(0,T;L^2(\mathbb{R}^d))} + \|\partial_t u\|_{L^2(0,T; H^1(\mathbb{R}^d))} + \|u\|_{L^\infty(0,T; H^1(\mathbb{R}^d))}
\le {C}(u_0, w_0, f).
\end{aligned}
\end{equation}
\end{theorem}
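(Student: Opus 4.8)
The plan is to follow the Galerkin-plus-compactness strategy of \cite[Section 7.2]{Evans}, using the uniform-in-$N$ bounds already obtained in Theorem \ref{thm:SG:est} as the source of compactness. First I would record that the estimates \eqref{eq:stab3}--\eqref{eq:stab4} say precisely that $\{u_N\}$ is bounded in $L^\infty(0,T;H^1(\mathbb{R}^d))$, $\{\partial_t u_N\}$ is bounded in $L^\infty(0,T;L^2(\mathbb{R}^d))\cap L^2(0,T;H^1(\mathbb{R}^d))$, and $\{\partial_t^2 u_N\}$ is bounded in $L^2(0,T;H^{-1}(\mathbb{R}^d))$, all independently of $N$. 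By the Banach--Alaoglu theorem together with the reflexivity and separability of the spaces involved, I can then extract a single subsequence, still denoted $\{u_N\}$, and a limit $u$ such that $u_N \overset{*}{\rightharpoonup} u$ in $L^\infty(0,T;H^1(\mathbb{R}^d))$, $\partial_t u_N \overset{*}{\rightharpoonup} \partial_t u$ in $L^\infty(0,T;L^2(\mathbb{R}^d))$ with $\partial_t u_N \rightharpoonup \partial_t u$ in $L^2(0,T;H^1(\mathbb{R}^d))$, and $\partial_t^2 u_N \rightharpoonup \partial_t^2 u$ in $L^2(0,T;H^{-1}(\mathbb{R}^d))$; the identification of the weak limits of the time derivatives with the time derivatives of $u$ is routine once one tests against smooth scalar functions of $t$.

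Next I would pass to the limit in \eqref{eq:sg} to show that $u$ solves \eqref{eq:wk}. Fix an index $M$ and a test function of the product form $v(\bm x)\chi(t)$ with $v\in V_M$ and $\chi\in C^1[0,T]$; for $N\ge M$ the function $v$ is admissible in \eqref{eq:sg}. Multiplying \eqref{eq:sg} by $\chi(t)$, integrating over $(0,T)$, and letting $N\to\infty$, every term converges by the weak(-$*$) convergences above: the term $(\partial_t^2 u_N,v)$ pairs an $H^{-1}$-weak limit against a fixed $H^1$ element, while $(\alpha\partial_t u_N,v)$, $(\beta\partial_t\nabla u_N,\nabla v)$ and $(\gamma^2\nabla u_N,\nabla v)$ pair $L^2$-weak limits against fixed $L^2$ data, the coefficients being bounded by \eqref{eq:abc:cond}. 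This gives the integrated weak identity for every $v$ in the dense subspace $\bigcup_M V_M$, and since this union is dense in $H^1(\mathbb{R}^d)$ and each term of $A(\cdot,\cdot)$ is continuous in $v$ in the $H^1$ norm, the identity extends to all $v\in H^1(\mathbb{R}^d)$. Removing the arbitrary $\chi$ then yields \eqref{eq:wk} for a.e.\ $t\in(0,T)$.

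To recover the initial data, I would use that $\partial_t u\in L^2(0,T;H^1(\mathbb{R}^d))$ with $\partial_t^2 u\in L^2(0,T;H^{-1}(\mathbb{R}^d))$, which by the Lions--Magenes lemma gives $\partial_t u\in C([0,T];L^2(\mathbb{R}^d))$ and, similarly, $u\in C([0,T];H^1(\mathbb{R}^d))$ after the usual modification, so the traces $u(0)$ and $\partial_t u(0)$ are well defined. Choosing $\chi$ with $\chi(T)=0$ and $\chi(0)\ne 0$ and integrating by parts in $t$ in both the limiting identity and the Galerkin identity, the boundary contributions at $t=0$ must agree; combined with $u_N(0)=\hat{\bm\Pi}_N u_0\to u_0$ in $H^1(\mathbb{R}^d)$ and $\partial_t u_N(0)=\hat{\bm\Pi}_N w_0\to w_0$ in $L^2(\mathbb{R}^d)$ (which follow from the uniform stability $\|\hat{\bm\Pi}_N u_0\|_{H^1}\le\|u_0\|_{H^1}$, the approximation property of Theorem \ref{thmErrEstiHerFunMultiHat}, and density), this forces $u(0)=u_0$ and $\partial_t u(0)=w_0$. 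The stability estimate \eqref{eq:conti:stab3} then follows from \eqref{eq:stab3} by weak lower semicontinuity of the norms under the weak(-$*$) limits, with the same constant.

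For uniqueness I would take two solutions with the same data, let $u$ denote their difference (so that $u_0=w_0=0$ and $f=0$), and test \eqref{eq:wk} with $v=\partial_t u(t)$, which is admissible since $\partial_t u\in L^2(0,T;H^1(\mathbb{R}^d))$. The step I expect to be the main obstacle is the rigorous justification of the resulting energy identity: the formal computation gives $\frac{1}{2}\frac{d}{dt}\big(\|\partial_t u\|_{L^2}^2+\|\gamma\nabla u\|_{L^2}^2\big)+\|\alpha^{1/2}\partial_t u\|_{L^2}^2+\|\beta^{1/2}\nabla\partial_t u\|_{L^2}^2=0$, but the identity $\frac{d}{dt}\|\partial_t u\|_{L^2}^2=2\langle\partial_t^2 u,\partial_t u\rangle$ requires the product rule for functions with $\partial_t u\in L^2(0,T;H^1(\mathbb{R}^d))$ and $\partial_t^2 u\in L^2(0,T;H^{-1}(\mathbb{R}^d))$, which is exactly the content of the Lions--Magenes lemma applied to $\partial_t u$ and which legitimizes both the pairing and the absolute continuity of $t\mapsto\|\partial_t u(t)\|_{L^2}^2$. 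Once this is in place, integrating from $0$ to $t$ and using the vanishing initial data gives $\|\partial_t u(t)\|_{L^2}^2+\|\gamma\nabla u(t)\|_{L^2}^2\le 0$, hence $\partial_t u\equiv 0$ and $\nabla u\equiv 0$; together with $u(0)=0$ this yields $u\equiv 0$, establishing uniqueness and completing the proof.
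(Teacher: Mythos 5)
Your proposal is correct and follows essentially the same route as the paper: uniform Galerkin bounds from Theorem \ref{thm:SG:est}, weak-$*$ compactness to extract the limits \eqref{eq:weakcon1}--\eqref{eq:weakcon5}, passage to the limit in the time-integrated identity (the paper tests with $\hat{\bm{\Pi}}_N v$ and strong convergence where you fix $v\in V_M$ and invoke density, an immaterial variant), recovery of $\partial_t u(0)=w_0$ via a cut-off in time with integration by parts (the paper's $\eta_T(t)=1-t/T$ is your $\chi$), lower semicontinuity for \eqref{eq:conti:stab3}, and the energy method with $v=\partial_t e$ for uniqueness. If anything, you are more explicit than the paper in justifying the energy identity for the difference via the Lions--Magenes lemma, which the paper carries out only formally.
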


\begin{proof}
We first show the \emph{existence} of the weak problem \eqref{eq:wk}. We have from Theorem \ref{thm:SG:est} that there exists a sequence of functions $\{u_N:\, N \in \mathbb{N} \}$ with  $u_N\in L^\infty(0,T; H^1(\mathbb{R}^d))$, and $\partial_t u_N \in L^\infty(0,T; L^2(\mathbb{R}^d))\cap L^2(0,T; H^1(\mathbb{R}^d))$, and $\partial_t^2 u_N \in L^2(0,T; H^{-1}(\mathbb{R}^d))$ for all $T\ge 0$, satisfying \eqref{eq:sg}-\eqref{eq:stab4} for each $N\ge 1$, and
$$u_N(0) = \hat{\bm{\Pi}}_N u_0, \quad \partial_t u_N(0) = \hat{\bm{\Pi}}_N w_0.$$

By virtue of estimates \eqref{eq:stab3} and \eqref{eq:stab4} and thanks to Theorem 3 of Appendix D.4 in \cite{Evans}, we have that there exists a function $u \in L^\infty(0,T; H^1(\mathbb{R}^d))$, and $\partial_t u \in L^\infty(0,T; L^2(\mathbb{R}^d))\cap L^2(0,T; H^1(\mathbb{R}^d))$, and $\partial_t^2 u \in L^2(0,T; H^{-1}(\mathbb{R}^d))$ such that
\begin{align}
    u_N &\rightharpoonup^* u \text{ in } L^\infty(0,T; H^1(\mathbb{R}^d)),  \label{eq:weakcon1}\\
    u_N &\rightharpoonup  u \text{ in } L^2(0,T; H^1(\mathbb{R}^d)), \label{eq:weakcon2}\\
    \partial_t u_N &\rightharpoonup^* \partial_tu \text{ in } L^\infty(0,T; L^2(\mathbb{R}^d)),  \label{eq:weakcon3}\\
    \partial_t u_N &\rightharpoonup  \partial_t u \text{ in } L^2(0,T; H^1(\mathbb{R}^d)), \label{eq:weakcon4} \\
    \partial_t^2 u_N &\rightharpoonup^* \partial_t^2 u \text{ in } L^2(0,T; H^{-1}(\mathbb{R}^d)) \label{eq:weakcon5}
\end{align}
for all $T\ge 0$ as $N\rightarrow \infty$.

Let $v(x)\in H^1(\mathbb{R}^d)$ and $\eta(t) \in C([0,T])$, we take $\hat{\bm{\Pi}}_Nv$ as the test function in \eqref{eq:sg} and multiply both sides of the resulting identity by $\eta(t)$ and integrate over $t\in [0,T]$ to get
\begin{equation}\label{eq:sg:st}
\begin{aligned}
\int_0^T \left ( {\partial_t^2 u_N}(t), \eta(t)\hat{\bm{\Pi}}_N v\right ) dt +
\int_0^T (\alpha {\partial_t u_N}(t), \eta(t)\hat{\bm{\Pi}}_N v) dt +
\int_0^T (\beta \partial_t \nabla u_N(t), \eta(t)\nabla \hat{\bm{\Pi}}_N v) dt &\\
+ \int_0^T (\gamma^2 \nabla u_N(t), \eta(t)\nabla \hat{\bm{\Pi}}_N v) dt
= \int_0^T (f,\eta(t)\hat{\bm{\Pi}}_Nv) dt.
\end{aligned}
\end{equation}
By taking the limit $N\rightarrow \infty$ and applying the convergence results \eqref{eq:weakcon1}-\eqref{eq:weakcon5}, we obtain
\begin{align*}
    &\int_0^T \left ( {\partial_t^2 u_N}(t), \eta(t)\hat{\bm{\Pi}}_N v\right ) dt \rightarrow
    \int_0^T \left ( {\partial_t^2 u}(t), \eta(t) v\right ) dt, \\
    &\int_0^T (\alpha {\partial_t u_N}(t), \eta(t)\hat{\bm{\Pi}}_N v) dt \rightarrow
    \int_0^T (\alpha {\partial_t u}(t), \eta(t)v) dt, \\
    &\int_0^T (\beta \partial_t \nabla u_N(t), \eta(t)\nabla \hat{\bm{\Pi}}_N v) dt \rightarrow
    \int_0^T (\beta \partial_t \nabla u(t), \eta(t)\nabla  v) dt, \\
    &\int_0^T (\gamma^2 \nabla u_N(t), \eta(t)\nabla \hat{\bm{\Pi}}_N v) dt \rightarrow
    \int_0^T (\gamma^2 \nabla u(t), \eta(t)\nabla v) dt,
\end{align*}
and
\begin{align*}
    \int_0^T (f,\eta(t)\hat{\bm{\Pi}}_N v) dt \rightarrow
    \int_0^T (f,\eta(t)v) dt.
\end{align*}
Consequently, we get by letting $N\rightarrow \infty$ in \eqref{eq:sg:st} that
\begin{equation}\label{eq:wk:st}
\begin{aligned}
\int_0^T \left ( {\partial_t^2 u}(t), \eta(t) v\right ) dt +
\int_0^T (\alpha {\partial_t u}(t), \eta(t) v) dt +
\int_0^T (\beta \partial_t \nabla u(t), \eta(t)\nabla v) dt &\\
+ \int_0^T (\gamma^2 \nabla u(t), \eta(t)\nabla v) dt
= \int_0^T (f,\eta(t) v) dt,~~ \quad \forall v\in H^1(\mathbb{R}^d). &
\end{aligned}
\end{equation}
Since $\eta(t)\in C([0,T])$ is arbitrary, we deduce that $u(t)$ satisfies \eqref{eq:wk} for all $T > 0$.

We now show that $u(0) = u_0$ and $\partial_t u(0) = w_0$ to complete the proof of the existence of the weak solution of \eqref{eq:wk}. It follows from \eqref{eq:weakcon2} and \eqref{eq:weakcon4} that
\begin{align*}
    u(0) = \lim_{N\rightarrow \infty} \hat{\bm{\Pi}}_N u_0 = u_0.
\end{align*}
Note that $\partial_t u(t)\in L^2(0,T; H^1(\mathbb{R}^d))$ and $\partial_t^2 u(t)\in L^2(0,T;H^{-1}(\mathbb{R}^d))$, then we have
\begin{align*}
    \|\partial_t u(t)\|_{L^2(\mathbb{R}^d)}^2 = 2\int_0^t (\partial_\tau^2 u(\tau), \partial_\tau u(\tau)) d\tau + \|\partial_t u(0)\|_{L^2(\mathbb{R}^d)}^2,
\end{align*}
which implies that $\partial_t u(t) \in C([0,T]; L^2(\mathbb{R}^d))$. Therefore, by replacing $\eta(t)$ with $\eta_T(t) = 1-t/T$ and integrating by parts against $t$ for the first term of  \eqref{eq:wk:st}, we obtain
\begin{equation}\label{eq:wk:st1}
\begin{aligned}
\int_0^T \frac{1}{T} \left ( {\partial_t u}(t), v\right ) dt +
\int_0^T (\alpha {\partial_t u}(t), \eta_T(t) v) dt +
\int_0^T (\beta \partial_t \nabla u(t), \eta_T(t)\nabla v) dt &\\
+ \int_0^T (\gamma^2 \nabla u(t), \eta_T(t)\nabla v) dt
= \int_0^T (f,\eta_T(t) v) dt
+ (\partial_t u(0), v). &
\end{aligned}
\end{equation}
Then, by applying the same argument for \eqref{eq:sg:st} with $\partial_t u_N(t) = \hat{\bm{\Pi}}_N w_0$, we obtain
\begin{equation*}
\begin{aligned}
\int_0^T \frac{1}{T} \left ( {\partial_t u_N}(t), \hat{\bm{\Pi}}_N v\right ) dt +
\int_0^T (\alpha {\partial_t u_N}(t), \eta_T(t)\hat{\bm{\Pi}}_N v) dt +
\int_0^T (\beta \partial_t \nabla u_N(t), \eta_T(t)\nabla \hat{\bm{\Pi}}_N v) dt &\\
+ \int_0^T (\gamma^2 \nabla u_N(t), \eta_T(t)\nabla \hat{\bm{\Pi}}_N v) dt
= \int_0^T (f,\eta_T(t)\hat{\bm{\Pi}}_Nv)
+ (\hat{\bm{\Pi}}_N w_0, \hat{\bm{\Pi}}_N v). &
\end{aligned}
\end{equation*}
Taking $N\rightarrow \infty$ gives
\begin{equation*}
\begin{aligned}
\int_0^T \frac{1}{T} \left ( {\partial_t u}(t), v\right ) dt +
\int_0^T (\alpha {\partial_t u}(t), \eta_T(t) v) dt +
\int_0^T (\beta \partial_t \nabla u(t), \eta_T(t)\nabla v) dt &\\
+ \int_0^T (\gamma^2 \nabla u(t), \eta_T(t)\nabla v) dt
= \int_0^T (f,\eta_T(t) v) dt
+ (w_0, v). &
\end{aligned}
\end{equation*}
Comparing the above equation with \eqref{eq:wk:st1}, we have $\partial_t u(0) = w_0$.
The estimate \eqref{eq:conti:stab3} follows by letting $N\rightarrow \infty$ in \eqref{eq:stab3}.

We now show the \emph{uniqueness} of the weak solution.
Let $u$ and $\bar{u}$ be two solutions of the weak problem \eqref{eq:wk} with $u(0) = \bar{u}(0) = u_0$ and $\partial_t u(0) = \partial_t \bar{u}(0) = w_0$. Denote $e = u-\bar{u}$. Then for a.e. $t\in (0,T)$, $e$ satisfies
\begin{equation*}
\left ( {\partial_t^2 e}(t), v\right ) +(\alpha {\partial_t e}(t), v) + (\beta \partial_t \nabla e(t), \nabla v) + (\gamma^2 \nabla e(t), \nabla v) = 0,\quad \forall v\in H^1(\mathbb{R}^d)
\end{equation*}
with $e(0) = \partial_t e(0) = 0$.
Taking $v = \partial_t e(t)$ in the above equation, we obtain
\begin{equation*}
\begin{aligned}
\frac{1}{2}\frac{d}{dt}\left(\|\partial_t e(t)\|_{L^2(\mathbb{R}^d)}^2 + \|\gamma\nabla e(t)\|_{L^2(\mathbb{R}^d)}^2 \right) +\|\alpha^{1/2}\partial_t e(t)\|_{L^2(\mathbb{R}^d)}^2 + \|\beta^{1/2}\partial_t\nabla e(t)\|_{L^2(\mathbb{R}^d)}^2
= 0.
\end{aligned}
\end{equation*}
This yields
\begin{equation*}
\begin{aligned}
\frac{1}{2}\frac{d}{dt}\left(\|\partial_t e(t)\|_{L^2(\mathbb{R}^d)}^2 + \|\gamma\nabla e(t)\|_{L^2(\mathbb{R}^d)}^2 \right)
\le 0.
\end{aligned}
\end{equation*}
Integrating the above equation from 0 to $t$, and noting that $e(0) = \partial_t e(0) = 0$, we have
\begin{equation*}
\begin{aligned}
\|\partial_t e(t)\|_{L^2(\mathbb{R}^d)}^2 + \|\gamma\nabla e(t)\|_{L^2(\mathbb{R}^d)}^2  \le 0.
\end{aligned}
\end{equation*}
Hence, $\partial_t e (t) = 0$. Using the initial condition $e(0) = 0$ again, we have $e(t) = 0$ for all $t\in [0,T]$, i.e., the weak problem \eqref{eq:wk} has a unique solution.
\end{proof}

\begin{remark}
By using the same argument for the uniqueness, we can readily show the stability of the solution in terms of the initial conditions $u_0, \; w_0$ and the source term $f$.
\end{remark}

\subsection{Regularity}\label{subsec:regulaity}
We now discuss how the regularity of the solution of the DVWE depends on the initial conditions and the source term $f$ with sufficiently smooth coefficients $\alpha, \beta,\gamma$. For the sake of simplicity, we assume that $\alpha,~ \beta,~ \gamma $ are constants.

Let $\tilde{u}_N:= \partial_t u_N$, and  $\tilde{u}_0 = w_0,\; \tilde{w}_0 = \tilde{w}_0^1+ \tilde{w}_0^2$ with $\tilde{w}_0^1= f(\cdot, 0)- {\rm div}(\gamma^2 \nabla u_0),\; \tilde{w}_0^2 = \alpha w_0-{\rm div}(\beta\nabla w_0)$, then we have
\begin{equation}\label{eq:sg:ut}
A(\tilde{u}_N,v) = (f_t,v), \quad \forall v\in V_N
\end{equation}
with $\tilde{u}_N(\bm{x},0)=\hat{\bm{\Pi}}_N \tilde{u}_0,\;\partial_t \tilde{u}_N(\bm{x},0)=\hat{\bm{\Pi}}_N \tilde{w}_0$.
By using the similar arguments in the last section, we obtain the following energy estimate:
\begin{equation*}
\begin{aligned}
&\|\partial_t \tilde{u}_N\|_{L^{\infty}(0,T;L^2(\mathbb{R}^d))} + \|\partial_t \tilde{u}_N\|_{L^2(0,T;H^1(\mathbb{R}^d))} + \|\tilde{u}_N\|_{L^\infty(0,T;H^1(\mathbb{R}^d))}\\
\le &{C}(\|f_t\|_{L^2(0,T; H^{-1}(\mathbb{R}^d))}+ \|\tilde{u}_0\|_{H^1(\mathbb{R}^d)}+ \|\tilde{w}_0^1\|_{L^2(\mathbb{R}^d)}+ \|\tilde{w}_0^2\|_{L^2(\mathbb{R}^d)})\\
\le & {C}(\|f_t\|_{L^2(0,T; H^{-1}(\mathbb{R}^d))}+ \|f(\cdot,0)\|_{L^2(\mathbb{R}^d)}+ \|{u}_0\|_{H^2(\mathbb{R}^d)}+ \|{w}_0\|_{H^2(\mathbb{R}^d)}).
\end{aligned}
\end{equation*}
This gives
\begin{equation}\label{eq:stab3:ut}
\begin{aligned}
&\|\partial_t^2 {u}_N\|_{L^{\infty}(0,T;L^2(\mathbb{R}^d))} + \|\partial_t^2 {u}_N\|_{L^2(0,T;H^1(\mathbb{R}^d))} + \|\partial_t {u}_N\|_{L^\infty(0,T;H^1(\mathbb{R}^d))}\\
\le & {C}(\|f_t\|_{L^2(0,T; H^{-1}(\mathbb{R}^d))}+ \|f(\cdot,0)\|_{L^2(\mathbb{R}^d)}+ \|{u}_0\|_{H^2(\mathbb{R}^d)}+ \|{w}_0\|_{H^2(\mathbb{R}^d)}).
\end{aligned}
\end{equation}
Take $v = -\Delta u_N$ in \eqref{eq:sg}, we obtain
\begin{equation*}\label{eq:eng:B}
\begin{aligned}
\frac{1}{2}\frac{d}{dt} \left (\|\beta^{1/2}\Delta u_N\|_{L^2(\mathbb{R}^d)}^2 + \|\alpha^{1/2}\nabla u_N\|_{L^2(\mathbb{R}^d)}^2\right) + \|\gamma \Delta u_N\|_{L^2(\mathbb{R}^d)}^2 = (f-\partial^2_{t}u_N, -\Delta u_N) &\\
\le \frac{1}{4\gamma^2}\|f-\partial^2_{t}u_N\|_{L^2(\mathbb{R}^d)}^2 + \|\gamma \Delta u_N\|_{L^2(\mathbb{R}^d)}^2. &
\end{aligned}
\end{equation*}
Integrating the above equation from 0 to $t$ and using the estimate \eqref{eq:stab3:ut}, we obtain
\begin{equation*}
\begin{aligned}
\|\Delta u_N\|_{L^2(\mathbb{R}^d)}^2 + \|\nabla u_N\|_{L^2(\mathbb{R}^d)}^2
\le C(\|f\|_{H^1(0,T; L^2(\mathbb{R}^d))}^2 + \|u_0\|_{H^2(\mathbb{R}^d)}^2 + \|w_0\|_{H^2(\mathbb{R}^d)}^2).
\end{aligned}
\end{equation*}
By letting $N\rightarrow \infty$, we obtain from the above estimate and \eqref{eq:stab3:ut} that
\begin{equation}\label{eq:reg:k=0}
\begin{aligned}
&\|\partial_t^2 {u}\|_{L^{\infty}(0,T;L^2(\mathbb{R}^d))} + \|\partial_t {u}\|_{L^\infty(0,T;H^1(\mathbb{R}^d))} + \|u\|_{L^\infty(0,T;H^2(\mathbb{R}^d))} + \|\partial_t^2 {u}\|_{L^2(0,T;H^1(\mathbb{R}^d))}\\
\le & C(\|f\|_{H^1(0,T; L^2(\mathbb{R}^d))} +  \|u_0\|_{H^2(\mathbb{R}^d)} + \|w_0\|_{H^2(\mathbb{R}^d)}).
\end{aligned}
\end{equation}




We next show the higher regularity result. In particular, we have the following result:
\begin{theorem}
Assume $\alpha, \beta, \gamma$ are sufficiently smooth, let $u$ be the solution of  \eqref{eq:wk}, if $u_0\in H^{k+2}(\mathbb{R}^d),\; w_0\in H^{k+2}(\mathbb{R}^d),\; f\in H^1(0,T;H^{k}(\mathbb{R}^d)),k\ge 0$, then
\begin{equation}\label{eq:reg:kg0}
\begin{aligned}
&\|\partial_t^2 u\|_{L^\infty(0,T;H^k(\mathbb{R}^d))}
+\|\partial_t u\|_{L^\infty(0,T;H^{k+1}(\mathbb{R}^d))}
+\|u\|_{L^\infty(0,T;H^{k+2}(\mathbb{R}^d))}
+\|\partial_t^2 u\|_{L^2(0,T;H^{k+1}(\mathbb{R}^d))}\\
& +\|\partial_t u\|_{L^2(0,T;H^{k+2}(\mathbb{R}^d))} \le C  \left(\|f\|_{H^1(0,T;H^{k}(\mathbb{R}^d))} + \|w_0\|_{H^{k+2}(\mathbb{R}^d)} + \|u_0\|_{H^{k+2}(\mathbb{R}^d)}
\right).
\end{aligned}
\end{equation}
\end{theorem}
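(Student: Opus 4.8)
The plan is to prove \eqref{eq:reg:kg0} by induction on $k$, using the already-established case $k=0$, namely \eqref{eq:reg:k=0}, as the base step. The organizing idea is to differentiate the equation \eqref{eq1} in space and rerun, on the differentiated quantities, the same two energy estimates that produced \eqref{eq:reg:k=0}. For a multi-index $\bm\nu$ with $|\bm\nu|_1=k$, write $D^{\bm\nu}$ for the corresponding spatial derivative and set $u^{\bm\nu}=D^{\bm\nu}u$. In the simplified setting where $\alpha,\beta,\gamma$ are constants, $D^{\bm\nu}$ commutes with every term of \eqref{eq1}, so $u^{\bm\nu}$ solves the same DVWE with source $D^{\bm\nu}f$ and initial data $D^{\bm\nu}u_0,\,D^{\bm\nu}w_0$. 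Since $u_0,w_0\in H^{k+2}$ and $f\in H^1(0,T;H^k)$, the differentiated data satisfy the hypotheses of the base estimate, i.e. $D^{\bm\nu}u_0,\,D^{\bm\nu}w_0\in H^2$ and $D^{\bm\nu}f\in H^1(0,T;L^2)$.

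First I would apply \eqref{eq:reg:k=0} to $u^{\bm\nu}$ and sum the squares over all $\bm\nu$ with $|\bm\nu|_1=k$. Using Plancherel to identify $\sum_{|\bm\nu|_1=k}\|D^{\bm\nu}v\|_{H^s}^2\cong|v|_{H^{k+s}}^2$, this immediately yields the first four terms of \eqref{eq:reg:kg0}, namely $\|\partial_t^2u\|_{L^\infty(H^k)}$, $\|\partial_t u\|_{L^\infty(H^{k+1})}$, $\|u\|_{L^\infty(H^{k+2})}$ and $\|\partial_t^2u\|_{L^2(H^{k+1})}$, each bounded by $\|f\|_{H^1(0,T;H^k)}+\|u_0\|_{H^{k+2}}+\|w_0\|_{H^{k+2}}$.

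The remaining term $\|\partial_t u\|_{L^2(0,T;H^{k+2})}$ I would obtain from a separate viscous-dissipation estimate: test the equation for $u^{\bm\nu}$ with $-\Delta\partial_t u^{\bm\nu}$. Integration by parts turns the second-order-in-time and the $\gamma^2$ terms into $\tfrac12\tfrac{d}{dt}\big(\|\nabla\partial_t u^{\bm\nu}\|^2+\gamma^2\|\Delta u^{\bm\nu}\|^2\big)$, the $\alpha$ term into the nonnegative $\alpha\|\nabla\partial_t u^{\bm\nu}\|^2$, and, crucially, the viscous term $-\beta\partial_t\Delta u^{\bm\nu}$ into the dissipation $\beta\|\Delta\partial_t u^{\bm\nu}\|^2$. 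Bounding the source via $(D^{\bm\nu}f,-\Delta\partial_t u^{\bm\nu})\le\tfrac{1}{2\beta}\|D^{\bm\nu}f\|^2+\tfrac{\beta}{2}\|\Delta\partial_t u^{\bm\nu}\|^2$, absorbing the last term into the dissipation, integrating in time and summing over $|\bm\nu|_1=k$ produces $\|\partial_t u\|_{L^2(0,T;H^{k+2})}$; the initial contributions $\|\nabla\partial_t u^{\bm\nu}(0)\|$ and $\|\Delta u^{\bm\nu}(0)\|$ are controlled by $\|w_0\|_{H^{k+2}}$ and $\|u_0\|_{H^{k+2}}$, and $\int_0^T\|D^{\bm\nu}f\|^2$ by $\|f\|_{L^2(0,T;H^k)}^2$. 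Combined with the first four terms, this closes \eqref{eq:reg:kg0}.

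Two justification points require care, and I expect these rather than the algebra to be the main obstacle. First, the test functions $-\Delta u^{\bm\nu}$ and $-\Delta\partial_t u^{\bm\nu}$ do not lie in $V_N$, since differentiation maps $\mathcal{P}_N$ into $\mathcal{P}_{N+1}$ (indeed $\phi_j'=-\sqrt{(j+1)/2}\,\phi_{j+1}+\sqrt{j/2}\,\phi_{j-1}$), so these estimates cannot be run verbatim on the Galerkin solution $u_N$; I would instead carry them out on the limiting weak solution $u$, whose admissibility for the integrations by parts is guaranteed by the regularity already obtained, and justify the spatial differentiation by the standard difference-quotient argument together with the uniform bounds above. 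Second, for the genuinely variable (merely smooth) coefficients allowed in the statement, $D^{\bm\nu}$ no longer commutes with the operators and one picks up commutator terms such as $[D^{\bm\nu},\beta\Delta]u$; these have order at most $k+1$ in space and are therefore controlled by the level-$(k-1)$ bounds and the principal terms above, so they can be absorbed after a Gr\"onwall argument. Closing this commutator/Gr\"onwall loop while keeping all constants independent of $N$ and $t$ is the delicate part of the proof.
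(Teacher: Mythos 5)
Your proposal is correct in substance but follows a genuinely different route from the paper. The paper stays entirely at the Galerkin level: it writes the equation \eqref{eq:sg:ut} for $\tilde{u}_N=\partial_t u_N$, tests it with $v=(-1)^k\partial_t\nabla^{2k}\tilde{u}_N$, runs the analogous high-order energy estimate on $u_N$ itself (implicitly also at level $k+1$ to reach the $H^{k+2}$ terms), and only then lets $N\to\infty$, so the uniform-in-$N$ bounds do double duty as compactness information; it also assumes $\alpha,\beta,\gamma$ constant throughout Section 3.2 and never actually treats the variable-coefficient case promised in the theorem statement. You instead differentiate the PDE in space, apply the already-proved base case \eqref{eq:reg:k=0} to $D^{\bm\nu}u$ (using uniqueness to identify $D^{\bm\nu}u$ as the weak solution with data $D^{\bm\nu}u_0$, $D^{\bm\nu}w_0$, $D^{\bm\nu}f$), and recover the one missing term $\|\partial_t u\|_{L^2(0,T;H^{k+2}(\mathbb{R}^d))}$ by a separate dissipation estimate with test function $-\Delta\partial_t u^{\bm\nu}$, all at the continuous level with difference quotients supplying the rigor. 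Notably, your observation that differentiation maps $\mathcal{P}_N$ out of itself (so $-\Delta u^{\bm\nu}$, $-\Delta\partial_t u^{\bm\nu}\notin V_N$) applies equally to the paper's own test function $(-1)^k\partial_t\nabla^{2k}\tilde{u}_N$, which likewise does not belong to $V_N$; the paper glosses over this admissibility issue, whereas your continuous-level formulation is precisely the standard way to repair it, at the cost of an extra difference-quotient justification. Your commutator/Gr\"onwall treatment of variable coefficients is also more than the paper provides. The trade-off: the paper's argument is shorter and directly compatible with its $N\to\infty$ limiting scheme, while yours is more careful about admissibility and generality but leaves the difference-quotient step and the commutator absorption asserted rather than executed.
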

\begin{proof}
Taking $v=(-1)^k\partial_t\nabla^{2k}\tilde{u}_N, ~k\ge 1$ in \eqref{eq:sg:ut}, we obtain
\begin{equation*}
\begin{aligned}
&\frac{1}{2}\frac{d}{dt}\left(\|\partial_t\nabla^{k}\tilde{u}_N\|_{L^2(\mathbb{R}^d)}^2+\|\gamma\nabla^{k+1}\tilde{u}_N\|_{L^2(\mathbb{R}^d)}^2\right)
+\alpha\|\partial_t\nabla^{k}\tilde{u}_N\|_{L^2(\mathbb{R}^d)}^2+\beta\|\partial_t\nabla^{k+1}\tilde{u}_N\|_{L^2(\mathbb{R}^d)}^2\\
&=(f_t,\partial_t\nabla^{2k}u_N)\le C(\beta)\|f_t\|_{H^{k-1}(\mathbb{R}^d)}^2
+\frac{\beta}{2}\|\partial_t\nabla^{k+1}u_N\|_{L^2(\mathbb{R}^d)}^2.
\end{aligned}
\end{equation*}
Integrating the above equation from 0 to $T$, we have
\begin{equation*}
\begin{aligned}
&\|\partial_t\nabla^{k}\tilde{u}_N\|_{L^\infty(0,T;L^2(\mathbb{R}^d))}^2 + \|\gamma\nabla^{k+1}\tilde{u}_N\|_{L^\infty(0,T;L^2(\mathbb{R}^d))}^2 + 2\alpha\|\partial_t\nabla^{k}\tilde{u}_N\|_{L^2(0,T;L^2(\mathbb{R}^d))}^2 \\
&+{\beta}\|\partial_t\nabla^{k+1}\tilde{u}_N\|_{L^2(0,T;L^2(\mathbb{R}^d))}^2 \le C\left(\|f_t\|_{L^2(0,T;H^{k-1}(\mathbb{R}^d))}^2+\|\nabla^{k+1} \tilde{u}_0\|_{L^2(\mathbb{R}^d)}^2+\|\nabla^k \tilde{w}_0\|_{L^2(\mathbb{R}^d)}^2\right),
\end{aligned}
\end{equation*}
which leads to
\begin{equation}\label{eq:reg:pf1}
\begin{aligned}
&\|\partial_t^2\nabla^{k}u_N\|_{L^\infty(0,T;L^2(\mathbb{R}^d))}^2
+\|\partial_t\nabla^{k+1}u_N\|_{L^\infty(0,T;L^2(\mathbb{R}^d))}^2
+\|\partial_t^2\nabla^{k+1}u_N\|_{L^2(0,T;L^2(\mathbb{R}^d))}^2\\
&\le C\left(\|f_t\|_{L^2(0,T;H^{k-1}(\mathbb{R}^d))}^2 + \|f(\cdot,0)\|_{H^{k}(\mathbb{R}^d)}^2 +\|\nabla^{k+2} w_0\|_{L^2(\mathbb{R}^d)}^2 + \|\nabla^{k+2} u_0\|_{L^2(\mathbb{R}^d)}^2
\right).
\end{aligned}
\end{equation}
Using the same argument for $\tilde{u}_N$ to $u_N$, we obtain the following estimate
\begin{equation}\label{eq:reg:pf2}
\begin{aligned}
&\|\partial_t\nabla^{k}{u}_N\|_{L^\infty(0,T;L^2(\mathbb{R}^d))}^2 + \|\nabla^{k+1}{u}_N\|_{L^\infty(0,T;L^2(\mathbb{R}^d))}^2 +\|\partial_t\nabla^{k+1}{u}_N\|_{L^2(0,T;L^2(\mathbb{R}^d))}^2 \\
& \le C\left(\|f\|_{L^2(0,T;H^{k-1}(\mathbb{R}^d))}^2+\|\nabla^{k+1} {u}_0\|_{L^2(\mathbb{R}^d)}^2+\|\nabla^k {w}_0\|_{L^2(\mathbb{R}^d)}^2\right).
\end{aligned}
\end{equation}
%
%
We put the above two estimates together as follow:
\begin{equation*}
\begin{aligned}
&\|\partial_t^2 u_N\|_{L^\infty(0,T;H^k(\mathbb{R}^d))}
+\|\partial_t u_N\|_{L^\infty(0,T;H^{k+1}(\mathbb{R}^d))}
+\|u_N\|_{L^\infty(0,T;H^{k+2}(\mathbb{R}^d))}
+\|\partial_t^2 u_N\|_{L^2(0,T;H^{k+1}(\mathbb{R}^d))}\\
& +\|\partial_t u_N\|_{L^2(0,T;H^{k+2}(\mathbb{R}^d))} \le C \left(\|f\|_{H^1(0,T;H^{k}(\mathbb{R}^d))} + \|w_0\|_{H^{k+2}(\mathbb{R}^d)} + \| u_0\|_{H^{k+2}(\mathbb{R}^d)}
\right).
\end{aligned}
\end{equation*}
The estimate \eqref{eq:reg:kg0} follows by letting $N\to\infty$.
\end{proof}

\section{Error estimates}\label{sec:err}
We show in this section the convergence of the Hermite spectral Galerkin approximation.

Let us denote the errors as follows:
\begin{equation*}\label{eq21}
e_N=u-u_N=\xi_N+\eta_N,~~\xi_N=\widehat{\bm\Pi}_Nu-u_N,~~~\eta_N=u-\widehat{\bm\Pi}_Nu.
\end{equation*}
We have the following error estimate for the Hermite spectral Galerkin approximation.

\begin{theorem}
Let $u$ and $u_N$ be the solutions of the weak problem \eqref{eq:wk} and  the Hermite spectral Galerkin problem \eqref{eq:sg}, respectively. Assume $u_0 \in \hat{B}_{\mu}(\mathbb{R}^d),\; w_0 \in \hat{B}_{\nu}(\mathbb{R}^d),\; u(t)\in L^2(0,T;\hat{B}_r(\mathbb{R}^d))\cap L^\infty(0,T;\hat{B}_s(\mathbb{R}^d)),\; \partial_t u(t) \in L^2(0,T;\hat{B}_q(\mathbb{R}^d))\cap L^\infty(0,T;\hat{B}_{\tau}(\mathbb{R}^d)), \; \partial_t^2 u(t) \in L^2(0,T;\hat{B}_p(\mathbb{R}^d)),t\ge 0, \; q,s>1, p,\mu,\nu,\tau>0$, then there holds the following error estimate:
\begin{equation}\label{eq:errest1}
\begin{aligned}
&\|\partial_te_N\| + \|e_N \|_{H^1(\mathbb{R}^d)}
\lesssim N^{-\frac{p}{2}} |\partial_t^2u|_{L^2(0,T;\hat{B}_p(\mathbb{R}^d))} +
N^{\frac{1-q}{2}}|\partial_tu|_{L^2(0,T;\hat{B}_q(\mathbb{R}^d))}
+ N^{-\frac{\tau}{2}}|\partial_tu|_{L^\infty(0,T;\hat{B}_{\tau}(\mathbb{R}^d))}\\
&+ N^{\frac{1-r}{2}}|u|_{L^2(0,T;\hat{B}_r(\mathbb{R}^d))}  + N^{\frac{1-s}{2}}|u|_{L^\infty(0,T;\hat{B}_s(\mathbb{R}^d))}
+N^{-\frac{\mu}{2}}|u_0|_{\hat{B}_{\mu}(\mathbb{R}^d)} +
N^{-\frac{\nu}{2}}|w_0|_{\hat{B}_{\nu}(\mathbb{R}^d)}
\end{aligned}
\end{equation}
for all $t>0$. Here and below $\|\cdot\|$ denotes for the standard $L^2$ norm.
\end{theorem}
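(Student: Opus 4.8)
The plan is to combine the Galerkin orthogonality with an energy identity for the full error $e_N$, feeding every residual term into the projection estimate of Theorem~\ref{thmErrEstiHerFunMultiHat}. First I would subtract \eqref{eq:sg} from \eqref{eq:wk} and restrict to $v\in V_N$ to obtain the orthogonality $A(e_N,v)=0$ for all $v\in V_N$. Since $\xi_N\in V_N$ is the discrete part of $e_N=\xi_N+\eta_N$, the function $\partial_t\xi_N\in V_N$ is an admissible test function; writing $\partial_t\xi_N=\partial_t e_N-\partial_t\eta_N$ and using $A(e_N,\partial_t\xi_N)=0$ turns the orthogonality into the energy relation
\begin{equation*}
A(e_N,\partial_t e_N)=A(e_N,\partial_t\eta_N).
\end{equation*}
Because $\alpha,\beta,\gamma$ do not depend on $t$, the left-hand side collapses, exactly as in the proof of Theorem~\ref{thm:SG:est}, to $\tfrac12\frac{d}{dt}\big(\|\partial_t e_N\|^2+\|\gamma\nabla e_N\|^2\big)+\|\alpha^{1/2}\partial_t e_N\|^2+\|\beta^{1/2}\partial_t\nabla e_N\|^2$, which after integration over $[0,t]$ produces the dissipation together with the initial energy $\|\partial_t e_N(0)\|^2+\|\gamma\nabla e_N(0)\|^2$; since $e_N(0)=u_0-\hat{\bm{\Pi}}_Nu_0$ and $\partial_t e_N(0)=w_0-\hat{\bm{\Pi}}_Nw_0$, these initial terms are controlled by Theorem~\ref{thmErrEstiHerFunMultiHat} and yield the contributions carrying $|u_0|_{\hat B_\mu}$ and $|w_0|_{\hat B_\nu}$.

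The core of the argument is the estimation of $\int_0^tA(e_N,\partial_t\eta_N)$, whose four terms I would bound by Cauchy--Schwarz and Young, arranging that every factor carrying $\partial_t e_N$ or $\partial_t\nabla e_N$ is absorbed either into the dissipation or into a Gronwall term for $\|\partial_t e_N\|^2+\|\nabla e_N\|^2$. Two of the four terms require integration by parts in time so as not to lose a time derivative that the energy cannot control. The second-order term becomes $\int_0^t(\partial_t^2 e_N,\partial_t\eta_N)=[(\partial_t e_N,\partial_t\eta_N)]_0^t-\int_0^t(\partial_t e_N,\partial_t^2\eta_N)$: the interior integral is estimated through $\|\partial_t^2\eta_N\|\lesssim N^{-p/2}|\partial_t^2u|_{\hat B_p}$ and supplies the $p$-term, the boundary value at $t$ gives (with $\partial_t e_N$ absorbed) the $\tau$-term $N^{-\tau/2}|\partial_t u|_{\hat B_\tau}$, and the boundary value at $0$ reproduces the $w_0$-error. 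The term $(\gamma^2\nabla e_N,\partial_t\nabla\eta_N)$ is likewise integrated by parts in time: its interior integral $-\int_0^t(\gamma^2\partial_t\nabla e_N,\nabla\eta_N)$ produces, after absorbing $\partial_t\nabla e_N$, the $L^2$-in-time bound $\|\nabla\eta_N\|\lesssim N^{(1-r)/2}|u|_{\hat B_r}$ (the $r$-term), while its boundary value at $t$ gives the $L^\infty$-in-time bound $N^{(1-s)/2}|u|_{\hat B_s}$ (the $s$-term) and its boundary value at $0$ the $u_0$-error. The remaining $\alpha$- and $\beta$-terms are bounded directly using $\|\partial_t\eta_N\|_{H^1}\lesssim N^{(1-q)/2}|\partial_t u|_{\hat B_q}$, giving the $q$-term. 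A Gronwall inequality then closes the estimate for $\|\partial_t e_N\|_{L^\infty(0,T;L^2)}+\|\nabla e_N\|_{L^\infty(0,T;L^2)}$.

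Finally, because there is no Poincar\'e inequality on $\mathbb R^d$ (as already noted in the proof of Theorem~\ref{thm:SG:est}), the missing zeroth-order piece $\|e_N\|_{L^2}$ of $\|e_N\|_{H^1}$ must be recovered separately: from $e_N(t)=e_N(0)+\int_0^t\partial_t e_N\,d\tau$ one gets $\|e_N(t)\|\le\|e_N(0)\|+\int_0^t\|\partial_t e_N\|\,d\tau$, where $\|e_N(0)\|\lesssim N^{-\mu/2}|u_0|_{\hat B_\mu}$ supplies the clean $\mu$-term and the integral is already controlled by the energy step. Collecting the bounds then gives \eqref{eq:errest1}. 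I expect the main obstacle to be exactly this interplay forced by the unbounded domain: the absence of Poincar\'e requires the extra, non-sharp $L^2$ recovery, and the two integrations by parts in time on the second-order and $\gamma^2$ couplings must be organized so that each surviving factor lands on a norm the energy or Gronwall argument can absorb. Moreover, with variable $\alpha,\beta,\gamma$ one cannot invoke the $L^2$-orthogonality of $\hat{\bm{\Pi}}_N$ to annihilate the lower-order couplings, so every projection residual has to be matched to the correct order and time-integrability in Theorem~\ref{thmErrEstiHerFunMultiHat}, which is precisely what generates the seven distinct exponents $p,q,r,s,\tau,\mu,\nu$.
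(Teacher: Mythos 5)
Your proof is correct, but it runs the energy argument on the full error $e_N$ rather than on the discrete part $\xi_N$, which is what the paper does, and this changes the structure of the estimate in an interesting way. The paper writes $A(\xi_N,v)=-A(\eta_N,v)$ for $v\in V_N$, tests with $v=\partial_t\xi_N$, and then \emph{every} residual term $(\partial_t^2\eta_N,\partial_t\xi_N)$, $(\alpha\partial_t\eta_N,\partial_t\xi_N)$, $(\beta\partial_t\nabla\eta_N,\partial_t\nabla\xi_N)$, $(\gamma^2\nabla\eta_N,\partial_t\nabla\xi_N)$ is bounded directly by Cauchy--Schwarz and Young, with all $\epsilon$-pieces absorbed into the dissipation $\|\alpha^{1/2}\partial_t\xi_N\|^2+\|\beta^{1/2}\partial_t\nabla\xi_N\|^2$; no integration by parts in time is needed anywhere. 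Moreover, since $u_N(0)=\hat{\bm{\Pi}}_Nu_0$ and $\partial_t u_N(0)=\hat{\bm{\Pi}}_Nw_0$, the paper's quantity satisfies $\xi_N(0)=\partial_t\xi_N(0)=0$, so no initial boundary terms arise in the energy step at all; the $L^\infty$-in-time terms with exponents $\tau$ and $s$ enter only afterwards, through the triangle inequality $e_N=\xi_N+\eta_N$ applied at time $t$. In your version those same terms appear instead as the boundary values at time $t$ of your two temporal integrations by parts (on the $\partial_t^2$-coupling and the $\gamma^2$-coupling), and the initial data errors appear genuinely in the initial energy, since $e_N(0)=\eta_N(0)\neq0$. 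Both routes close, and both end with the same zeroth-order recovery of $\|e_N\|$ via Gronwall forced by the absence of a Poincar\'e inequality on $\mathbb{R}^d$; what the paper's choice buys is the elimination of the time integrations by parts and of all initial boundary terms, while your choice buys a single energy identity for the quantity actually being estimated, avoiding the final triangle-inequality bookkeeping.

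One wrinkle in your version deserves care: the initial energy contains $\|\gamma\nabla e_N(0)\|^2=\|\gamma\nabla\eta_N(0)\|^2$, and by Theorem~\ref{thmErrEstiHerFunMultiHat} the gradient of the projection error costs a half power, $\|\nabla\eta_N(0)\|\lesssim N^{(1-\mu)/2}|u_0|_{\hat B_{\mu}(\mathbb{R}^d)}$, \emph{not} $N^{-\mu/2}$ as your phrase ``yield the contributions carrying $|u_0|_{\hat B_\mu}$'' suggests. To recover the bound exactly as stated in \eqref{eq:errest1} you should charge this term to the $s$-slot via $|u_0|_{\hat B_s}\le|u|_{L^\infty(0,T;\hat B_s(\mathbb{R}^d))}$ (using $u(0)=u_0$), keeping the clean $N^{-\mu/2}|u_0|_{\hat B_{\mu}}$ term only where it truly arises, namely in $\|e_N(0)\|$ during your final $L^2$ recovery. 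In the paper this issue never surfaces because $\xi_N(0)=0$. This is a bookkeeping correction, not a gap in the method.
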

\begin{proof}
We derive from \eqref{eq:wk} and \eqref{eq:sg} that
\begin{equation*}
A(u-u_N,v)=0,\quad~\forall v\in V_N,
\end{equation*}
where $A(\cdot,\cdot)$ is defined in \eqref{eq22}.
Consequently, we have
\begin{equation}\label{eq28}
A(\xi_N,v)=-A(\eta_N,v), \quad~~\forall v\in V_N.
\end{equation}
By taking $v=\partial_t\xi_N$ in \eqref{eq28}, we get
\begin{equation}\label{eq26}
A\big(\xi_N,\partial_t\xi_N\big)=\frac{1}{2}\frac{d}{dt}\|\partial_t\xi_N\|^2+\|\alpha^{\frac{1}{2}}\partial_t\xi_N\|^2+\|\beta^{\frac{1}{2}}\partial_t\nabla\xi_N\|^2+
\frac{1}{2}\frac{d}{dt}\|\gamma\nabla\xi_N\|^2,
\end{equation}
and
\begin{equation}\label{eq27}
\begin{aligned}
A\big(\eta_N,\partial_t\xi_N\big)&=\big(\partial_t^2\eta_N,\partial_t\xi_N\big)+\big(\alpha\partial_t\eta_N,\partial_t\xi_N\big)
+\big(\beta\partial_t\nabla\eta_N,\partial_t\nabla\xi_N\big)+\big(\gamma^2\nabla\eta_N,\partial_t\nabla\xi_N\big)\\
&\le \epsilon_1\|\partial_t\xi_N\|^2 + C(\epsilon_1)\|\partial_t^2\eta_N\|^2
+\epsilon_2 \|\partial_t\xi_N\|^2 + C(\epsilon_2)\|\partial_t\eta_N\|^2 +\epsilon_3\|\partial_t\nabla\xi_N\|^2\\
&~~~+ C(\epsilon_3)\|\partial_t\nabla\eta_N\|^2 + \epsilon_4 \|\partial_t\nabla\xi_N\|^2+C(\epsilon_4)\|\nabla\eta_N\|^2.
\end{aligned}
\end{equation}
Set $\epsilon_1 + \epsilon_2 = \alpha,\; \epsilon_3 + \epsilon_4 = \beta$, then
by combining the above two equations, and using estimate \eqref{eqnErrEstiHerMultiH}, we obtain
\begin{equation*}
\begin{aligned}
\frac{d}{dt}\big(\|\partial_t\xi_N\|^2+\|\gamma\nabla\xi_N\|^2\big)
\le &C(\|\partial_t^2\eta_N\|^2+\|\partial_t\eta_N\|^2+\|\partial_t\nabla\eta_N\|^2+\|\nabla\eta_N\|^2)\\
\le & C(N^{-p} |\partial_t^2u|^2_{\hat{B}_p(\mathbb{R}^d)} +  N^{1-q}|\partial_tu|^2_{\hat{B}_q(\mathbb{R}^d)} +N^{1-r}|u|^2_{\hat{B}_r(\mathbb{R}^d)}),
\end{aligned}
\end{equation*}
where $C$ is a constant independent of $N$. Integrating the above equation from 0 to $t$, we obtain
\begin{equation*}
\begin{aligned}
\|\partial_t\xi_N\|^2+\|\gamma\nabla\xi_N\|^2\lesssim & N^{-p} |\partial_t^2u|^2_{L^2(0,T;\hat{B}_p(\mathbb{R}^d))} +
N^{1-q}|\partial_tu|^2_{L^2(0,T;\hat{B}_q(\mathbb{R}^d))} +
N^{1-r}|u|^2_{L^2(0,T;\hat{B}_r(\mathbb{R}^d))}\\
&+N^{-\mu}\|u_0\|_{\hat{B}_{\mu}(\mathbb{R}^d)}^2 + N^{-\nu}\|w_0\|_{\hat{B}_{\nu}(\mathbb{R}^d)}^2.
\end{aligned}
\end{equation*}
Therefore, the following estimate
\begin{equation}\label{eq:errpf3}
\begin{aligned}
&\|\partial_te_N \|^2 + \|\nabla e_N \|^2
\lesssim N^{-p} |\partial_t^2u|^2_{L^2(0,T;\hat{B}_p(\mathbb{R}^d))} +
N^{1-q}|\partial_tu|^2_{L^2(0,T;\hat{B}_q(\mathbb{R}^d))} +
N^{1-r}|u|^2_{L^2(0,T;\hat{B}_r(\mathbb{R}^d))} \\
&+N^{-\mu}|u_0|_{\hat{B}_{\mu}(\mathbb{R}^d)}^2 +
N^{-\nu}|w_0|_{\hat{B}_{\nu}(\mathbb{R}^d)}^2 +
N^{-\tau}|\partial_tu|^2_{L^\infty(0,T;\hat{B}_{\tau}(\mathbb{R}^d))}+ N^{1-s}|u|^2_{L^\infty(0,T;\hat{B}_s(\mathbb{R}^d))}.
\end{aligned}
\end{equation}
follows by using the triangle inequality and the projection property. To obtain the estimate \eqref{eq:errest1}, we are left to estimate $\|e_N(t)\|$. Note that
\begin{align*}
    \frac{d}{dt}\|e_N(t)\|^2
    \le  2\|e_N(t)\| \|\partial_t e_N(t)\|
    \le  \|e_N(t)\|^2  + \|\partial_t e_N(t)\|^2.
\end{align*}
Then using the Gronwall's inequality, we obtain
\begin{align*}
    \|e_N(t)\| \le C \|\partial_t e_N(t)\|.
\end{align*}
Consequently, the estimate \eqref{eq:errest1} follows by the estimate \eqref{eq:errpf3}.
\end{proof}

\begin{remark}
If a function $v$ decays sufficiently fast at infinity, then $\|v\|_{H^k(\mathbb{R}^d)} \approx \|v\|_{\hat{B}_k(\mathbb{R}^d)}, k\ge 0$. Therefore, if $u,\partial_t u , \partial_t^2 u$ decay sufficiently fast at infinity, $\alpha, \beta, \gamma$ are sufficiently smooth, and $f\in H^1(0,T;H^{k}(\mathbb{R}^d)),\; u_0 \in H^{k+2}(\mathbb{R}^d),\; w_0 \in H^{k+2}(\mathbb{R}^d)$, in view of the estimates \eqref{eq:reg:kg0}, we observe that the estimate \eqref{eq:errest1} is reduced as
\begin{equation}\label{eq:errest2}
\begin{aligned}
\|\partial_te_N\| + \|e_N \|_{H^1(\mathbb{R}^d)}
\lesssim N^{-\frac{k+1}{2}} \left(\|f\|_{H^1(0,T;H^{k}(\mathbb{R}^d))} + \|w_0\|_{H^{k+2}(\mathbb{R}^d)} + \|u_0\|_{H^{k+2}(\mathbb{R}^d)}
\right).
\end{aligned}
\end{equation}
\end{remark}

\section{Implementation and numerical examples}\label{sec:num}
In this section, we shall briefly give the implementation details, and then present several numerical examples to demonstrate the proposed algorithm.

\subsection{Implementation}
Now let us give the details of the  implementation. We first consider the space discretization. Let
\begin{align*}
    {\Phi}_{|{\bm j}|_1}( {\bm x}) = \prod_{k=1}^d {\phi}_{j_k}(x_k),
\end{align*}
where $|{\bm j}|_1 = j_1 + \ldots +j_d$.
Then $\{\Phi_j( {\bm x})\}_{j=1}^{N_b}$ are the basis functions of $V_N$. We can express the numerical approximation $u_N$ as
\begin{equation}\label{eq7}
u_N(\bm{x},t)=\sum\limits_{j=1}^{N_b}\widehat{u}_j(t)\Phi_j(\bm{x}),
\end{equation}
where $N_b$ is the total number of basis functions.

By setting $v=\Phi_i({\bm x})~(i=1,\cdots,N_b)$ and using the formulation \eqref{eq7}, we can rewrite
\eqref{eq:sg} into the following linear system
\begin{equation}\label{eq8}
M\frac{d^2}{dt^2}\widehat{U}(t)+M_\alpha\frac{d}{dt}\widehat{U}(t)
+S_\beta\frac{d}{dt}\widehat{U}(t)+S_\gamma\widehat{U}(t)=F(t),
\end{equation}
where the mass matrix $M$ and weighted mass matrix $M_\alpha$ are given by
$$M_{ij}=(\Phi_j,\Phi_i),~~M_{\alpha,{ij}}=(\alpha\Phi_j,\Phi_i),$$
and the weighted stiffness matrices $S_\beta$ and $S_\gamma$ are given by
$$S_{\beta,{ij}}=(\beta\nabla\Phi_j,\nabla\Phi_i),~~S_{\gamma,{ij}}
=(\gamma^2\nabla\Phi_j,\nabla\Phi_i),$$
%
%
and
$$F(t) = \left[(f,\Phi_1),\ldots, (f,\Phi_{N_b})\right]^T,\;
\widehat{U}(t)= \left[\widehat{u}_1(t),\ldots, \widehat{u}_{N_b}(t)\right]^T.$$


For the computation of the mass and stiff matrices, the explicit forms can be found in \cite[Section 7.2]{shen} if $\alpha,\beta,\gamma$ are all constants. Otherwise, if $\alpha,\beta,\gamma$ are variable coefficients, to obtain the matrices $M_\alpha,\, S_\beta$ and $S_\gamma$, we use the Gauss-Hermite quadrature. The $n$-point Gauss-Hermite rule in $\mathbb{R}$ reads as
$$\int_{\mathbb{R}}{\rm e}^{-x^2}f(x)dx\approx\sum\limits_{k=1}^n \omega_kf(x_k),$$
where $x_k,\, \omega_k~(k=1,2,\cdots,n)$ are the Hermite Gauss quadrature points and weights, respectively.

Now we introduce the time discretization for solving the resulting linear system \eqref{eq8}.
To get a globally high-order accurate scheme in time,
here we would like to employ the third-stage strong stability
preserving (SSP) Runge-Kutta (RK) method for the ordinary differential system \eqref{eq8}. To this end, an auxiliary variable $\widehat{V}(t)$ is first introduced for \eqref{eq8} to get the following first-order system:
\begin{equation*}
\left\{\begin{aligned}
&\frac{d}{dt}\widehat{U}(t)=\widehat{V}(t),\\
&\frac{d}{dt}\widehat{V}(t)=A\widehat{U}(t)+B\widehat{V}(t)+\widetilde{F}(t),
\end{aligned}\right.
\end{equation*}
where $A=-M^{-1}S_{\gamma},B=-M^{-1}\big(M_\alpha+S_\beta\big)$ and $\widetilde{F}
=M^{-1}F$.

Assume that the time interval is discretized as: $t_{n+1}= t_n+\Delta t_n, n = 0,1,2,\cdots$, where $\Delta t_n$ is the time step size at $t=t_n$.
Let $u^n$ and $v^n$ be
the approximations to $u(\cdot,t^n)$ and $v(\cdot,t_n)$, then the third-stage SSP-RK method is used for the time discretization as follows:
\\
Stage 1:
\begin{equation*}
\begin{aligned}
&\widehat{U}^{(1)}=\widehat{U}^n+\Delta t_n\widehat{V}^n,\\
&\widehat{V}^{(1)}=\widehat{V}^n+\Delta t_n\left(A\widehat{U}^n+B\widehat{V}^n+\widetilde{F}^n\right).
\end{aligned}
\end{equation*}
Stage 2:
\begin{equation*}
\begin{aligned}
&\widehat{U}^{(2)}=\frac{3}{4}\widehat{U}^n+\frac{1}{4}\left(\widehat{U}^{(1)}+\Delta t_n\widehat{V}^{(1)}\right),\\
&\widehat{V}^{(2)}=\frac{3}{4}\widehat{V}^n+\frac{1}{4}\left(\widehat{V}^{(1)}+\Delta t_n\big(A\widehat{U}^{(1)}+B\widehat{V}^{(1)}+\widetilde{F}^{(1)}\big)\right).
\end{aligned}
\end{equation*}
Stage 3:
\begin{equation*}
\begin{aligned}
&\widehat{U}^{n+1}=\frac{1}{3}\widehat{U}^n+\frac{2}{3}\left(\widehat{U}^{(2)}+\Delta t_n\widehat{V}^{(2)}\right),\\
&\widehat{V}^{n+1}=\frac{1}{3}\widehat{V}^n+\frac{2}{3}\left(\widehat{V}^{(2)}+\Delta t_n\big(A\widehat{U}^{(2)}+B\widehat{V}^{(2)}\big)+\widetilde{F}^{(2)}\right).
\end{aligned}
\end{equation*}

\subsection{Numerical examples}
In the following, we present several numerical examples to test the accuracy of the Hermite spectral Galerkin method and to illustrate the behavior of the solutions to problem \eqref{eq1}.

\textbf{Example 1}. Accuracy tests with \emph{smooth} initial conditions and source term in the one-dimensional case.
We consider the 1D diffusive-viscous wave equation with
$\alpha=\beta=\gamma=1$ and different initial conditions and source functions:
\begin{enumerate}[(\romannumeral1)]
\item $f=0,~~~u(x,0)={\rm e}^{-x^2},~~~u_t(x,0)=-{\rm e}^{-x^2}.$
\item $f={\rm e}^{-x^2}[(1-4x^2)\sin t+(3-4x^2)\cos t],~~~u(x,0)=0,~~~u_t(x,0)={\rm e}^{-x^2}.$
\end{enumerate}

These two problems have exact solutions $u(x,t)={\rm e}^{-x^2-t}$ and
$u(x,t)={\rm e}^{-x^2}\sin t$ respectively.
We compute the numerical solutions with the spectral method and the
third-stage SSP Runge-Kutta method ($\Delta t_n=10^{-4}$) until the final time
$T=1$. The degree of the space approximation is $N$. The numerical errors measured by three different norms ($L^2$ and $L^\infty$) and orders of accuracy are listed in Table \ref{tab1}
for case (\romannumeral1) and case (\romannumeral2), respectively. We also show the errors in semi-log scale in Figure \ref{fig1} for the cases (\romannumeral1) and (\romannumeral2). We observe that the expected exponential convergence rates are obtained.
\begin{table}[H]
\centering
\caption{Example 1: Errors and convergence rates for the 1D cases.}
\label{tab1}
\begin{tabular}{c|c|c|c|c||c|c|c|c}
\hline
\hline
 & \multicolumn{4}{c||}{case (\romannumeral1)} & \multicolumn{4}{c}{case (\romannumeral2)}\\
 \hline
$N$ & $L^2$ error & $L^2$ order & $L^\infty$ error
& $L^\infty$ order & $L^2$ error & $L^2$ order & $L^\infty$ error & $L^\infty$ order\\
\hline
  10   &    2.751E-04   &  ---   &    1.316E-04   &  ---&    7.467E-04   &  ---   &    4.783E-04   &  ---\\
  15   &    2.855E-05   &  5.588   &    1.302E-05   &  5.705 &    8.033E-05   &  5.499   &    5.244E-05   &  5.452\\
  20   &    9.792E-07   & 11.723   &    4.143E-07   & 11.983 &    2.879E-06   & 11.570   &    1.928E-06   & 11.483\\
  25   &    1.045E-07   & 10.028   &    4.254E-08   & 10.201   &    3.149E-07   &  9.918   &    2.143E-07   &  9.844\\
  30   &    3.679E-09   & 18.355   &    1.378E-09   & 18.810  &    1.145E-08   & 18.179   &    7.944E-09   & 18.072\\
  35   &    3.971E-10   & 14.441   &    1.501E-10   & 14.384 &    1.259E-09   & 14.318   &    8.845E-10   & 14.240\\
  40   &    1.417E-11   & 24.964   &    5.056E-12   & 25.392 &    4.607E-11   & 24.774   &    3.281E-11   & 24.670\\
  45   &    1.566E-12   & 18.700   &    7.497E-13   & 16.205 &    5.106E-12   & 18.678   &    3.656E-12   & 18.632\\
  50   &    2.934E-13   & 15.891   &    2.746E-13   &  9.533 &    5.078E-13   & 21.906   &    4.288E-13   & 20.341\\
\hline
\hline
\end{tabular}
\end{table}

\begin{figure}[H]
\begin{minipage}{0.48\textwidth}
\centering
\includegraphics[width=3.2in]{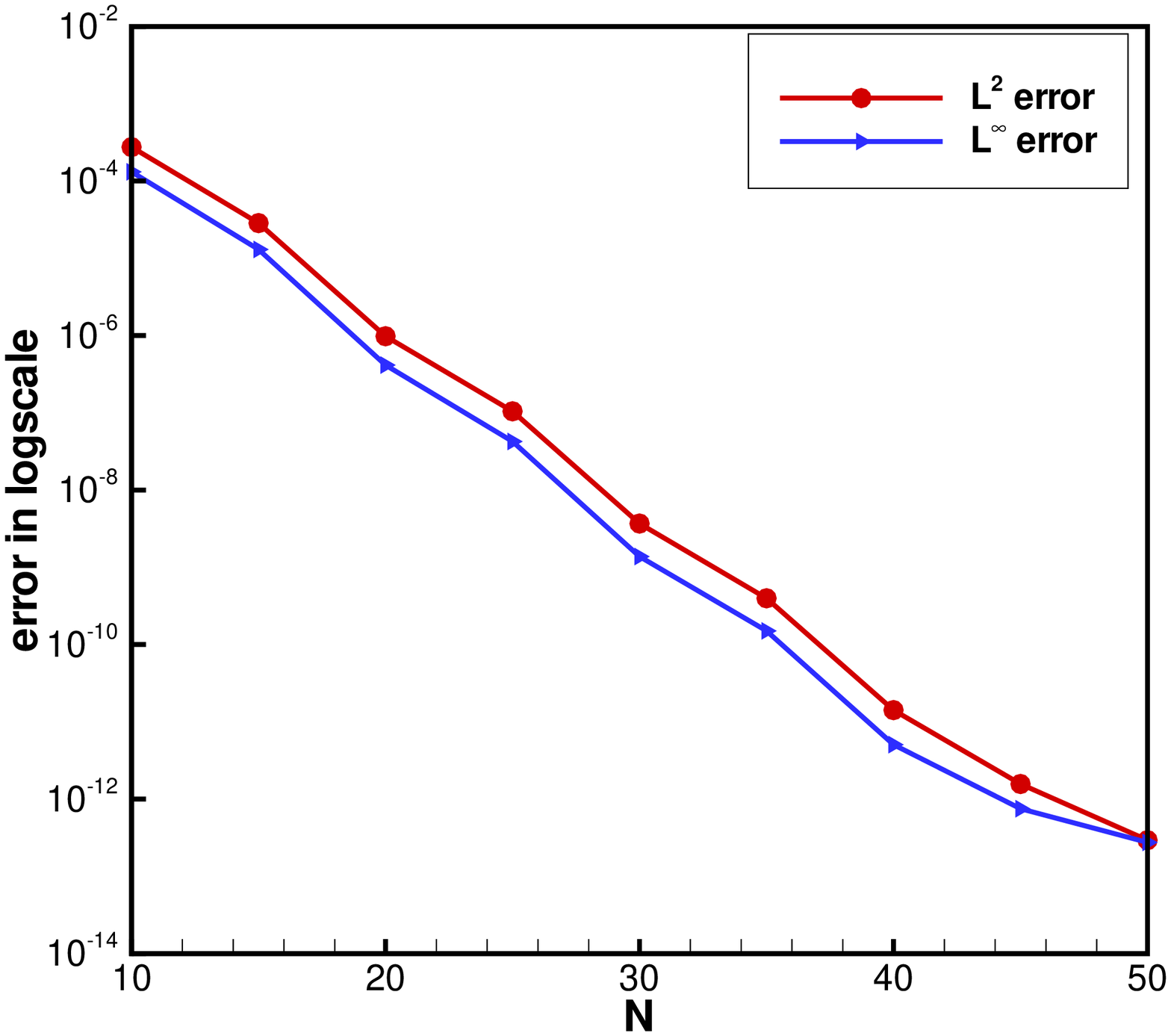}
\caption*{(a) case (\romannumeral1)}
\end{minipage}
\begin{minipage}{0.48\textwidth}
\centering
\includegraphics[width=3.2in]{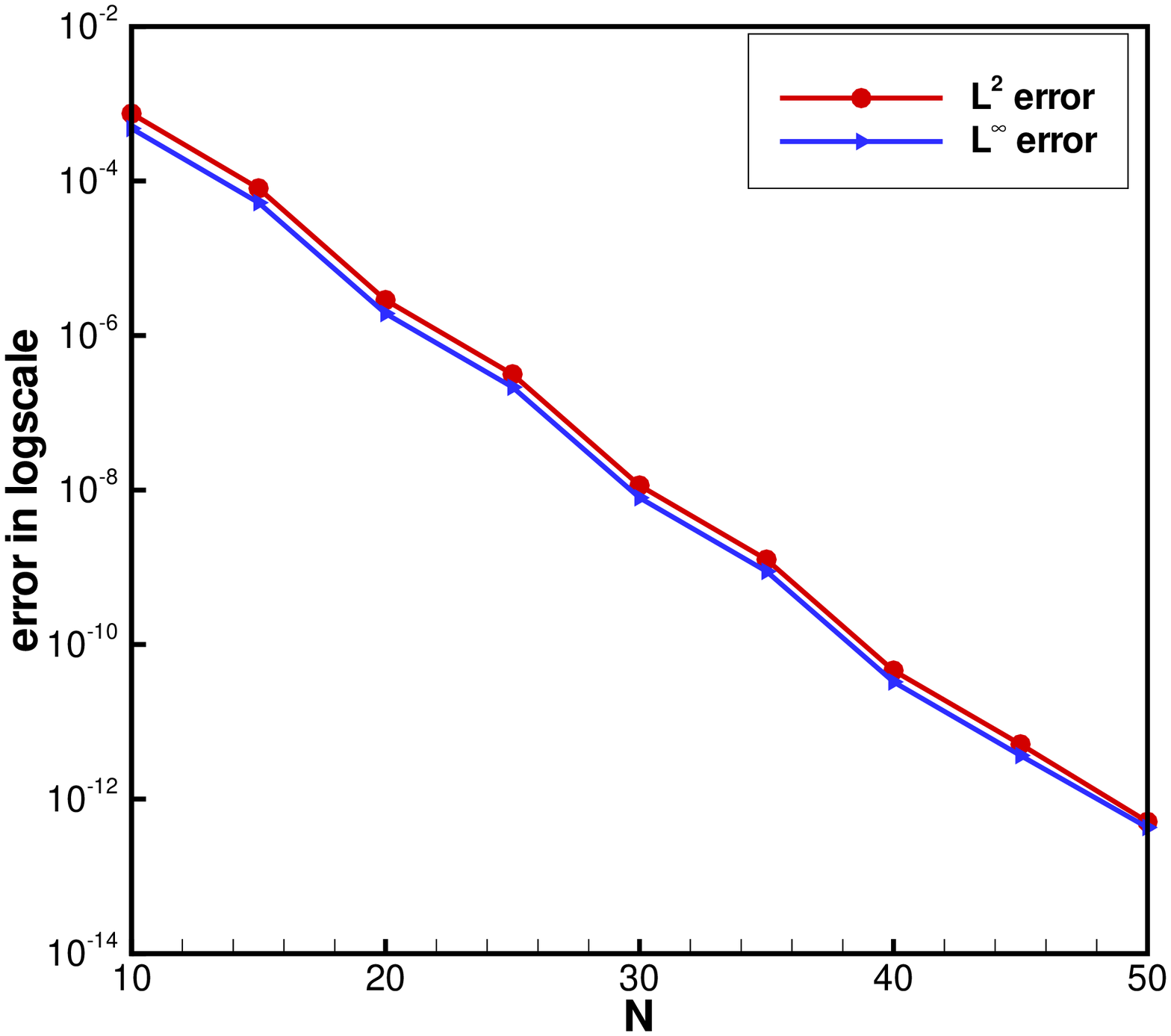}
\caption*{(b) case (\romannumeral2)}
\end{minipage}
\caption{Example 1: Convergence of the $L^2$ and $L^\infty$ errors for the solutions.}
\label{fig1}
\end{figure}

\textbf{Example 2}. Accuracy tests with \emph{smooth} initial conditions and source term in the two-dimensional case.
We compute the 2D diffusive-viscous wave equation
with $\alpha=\beta=\gamma=1$ and consider the following two cases:
\begin{enumerate}[(\romannumeral1)]
\item $f=0,~~~u(x,y,0)={\rm e}^{-(x^2+y^2)},~~~u_t(x,y,0)=-{\rm e}^{-(x^2+y^2)}.$
\item $f={\rm e}^{-x^2}[(3-4x^2-4y^2)\sin t+(5-4x^2-4y^2)\cos t],~~~u(x,y,0)=0,~~~u_t(x,y,0)={\rm e}^{-(x^2+y^2)}.$
\end{enumerate}

For these two problems, the exact solutions are given by
$u(x,y,t)={\rm e}^{-(x^2+y^2)-t}$ and
$u(x,y,t)={\rm e}^{-(x^2+y^2)}\sin t$, respectively. The degrees of the space approximation is $N\times N$.
Set $\Delta t_n=10^{-4}$, we compute the numerical approximations until
$T=0.5$. The errors are also measured by three different norms ($L^2$ and $L^\infty$). We present the errors as well as the convergence rates in Table \ref{tab3}
for the case (\romannumeral1) and the case (\romannumeral2), respectively. We also plot the convergence of the errors in semilog scale in Figure \ref{fig2} showing again that the spectral accuracy with respect to $N$ is obtained.
\begin{table}[H]
\centering
\caption{Example 2: Errors and convergence rates for the 2D cases.}
\label{tab3}
\begin{tabular}{c|c|c|c|c||c|c|c|c}
\hline
\hline
 & \multicolumn{4}{c||}{case (\romannumeral1)} & \multicolumn{4}{c}{case (\romannumeral2)}\\
 \hline
$N$ & $L^2$ error & $L^2$ order & $L^\infty$ error
& $L^\infty$ order & $L^2$ error & $L^2$ order & $L^\infty$ error & $L^\infty$ order\\
\hline
  10   &    7.181E-04   &  ---   &    4.293E-04   &  --- &   6.347E-04   &  ---   &    2.620E-04   &  ---\\
  15   &    7.452E-05   &  3.269   &    4.141E-05   &  3.374 &   6.781E-05   &  3.226   &    2.563E-05   &  3.354\\
  20   &    2.556E-06   &  8.318   &    1.302E-06   &  8.532 & 2.413E-06   &  8.227   &    9.349E-07   &  8.166\\
  25   &    2.728E-07   &  7.779   &    1.324E-07   &  7.947  &   2.630E-07   &  7.705   &    1.032E-07   &  7.660\\
  30   &    9.603E-09   & 14.997   &    4.373E-09   & 15.282 &   9.520E-09   & 14.872   &    3.808E-09   & 14.786\\
  35   &    1.037E-09   & 12.209   &    4.547E-10   & 12.415  &   1.045E-09   & 12.119   &    4.228E-10   & 12.056\\
  40   &    3.697E-11   & 21.626   &    1.562E-11   & 21.869  &   3.811E-11   & 21.480   &    1.575E-11   & 21.344\\
  45   &    4.028E-12   & 16.602   &    1.852E-12   & 15.969  &   4.202E-12   & 16.512   &    1.743E-12   & 16.483\\
  50   &    3.287E-13   & 21.275   &    2.891E-13   & 15.766  &   2.438E-13   & 24.173   &    1.822E-13   & 19.174\\
\hline
\hline
\end{tabular}
\end{table}

\begin{figure}[H]
\begin{minipage}{0.48\textwidth}
\centering
\includegraphics[width=3.2in]{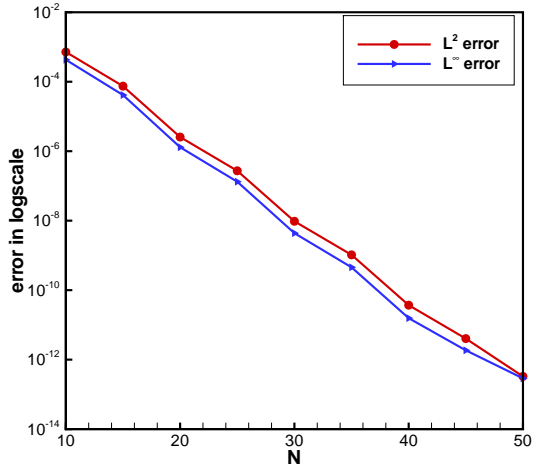}
\caption*{(a) case (\romannumeral1)}
\end{minipage}
\begin{minipage}{0.48\textwidth}
\centering
\includegraphics[width=3.2in]{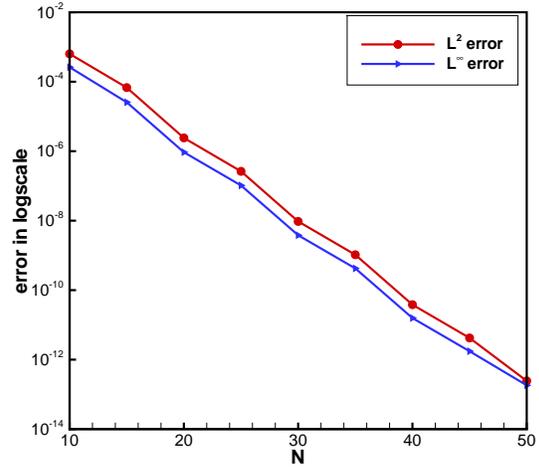}
\caption*{(b) case (\romannumeral2)}
\end{minipage}
\caption{Example 2: Convergence of the $L^2$ and $L^\infty$ errors for the solutions.}
\label{fig2}
\end{figure}

\textbf{Example 3}. Accuracy tests with \emph{non-smooth} source term in the one-dimensional case.
Here we consider the 1D problem with $\alpha=\beta=\gamma=1$
and the initial conditions and source function are taken as:
$$u(x,0)=u_t(x,0)=0,~~~f(x,t)=x^\mu{\rm e}^{-x^2}\cos(t).$$
For the above problem, we set $\mu=\frac{1}{3}$ and $\mu=\frac{4}{3}$, respectively. The expected convergence rates of $\|e\|_{H^1(\mathbb{R})}$ are almost $\frac{11}{12}$ and $\frac{17}{12}$, respectively according to the estimate \eqref{eq:errest2}. The numerical simulation is implemented with the present Hermite spectral method and the third-stage SSP Runge-Kutta method ($\Delta t_n=10^{-4}$) until the final time $T=0.5$. Since we don't have the exact solution, we take the numerical solutions obtained with $N=500$ as the ``reference solution'' to compute the numerical errors and consequent the convergence rates. The result concerning the convergence of the $H^2$-error is shown in Figure \ref{figadd}. We observe that the convergence rates are coincide with the theoretical results.
\begin{figure}[H]
\centering
\includegraphics[width=3.5in]{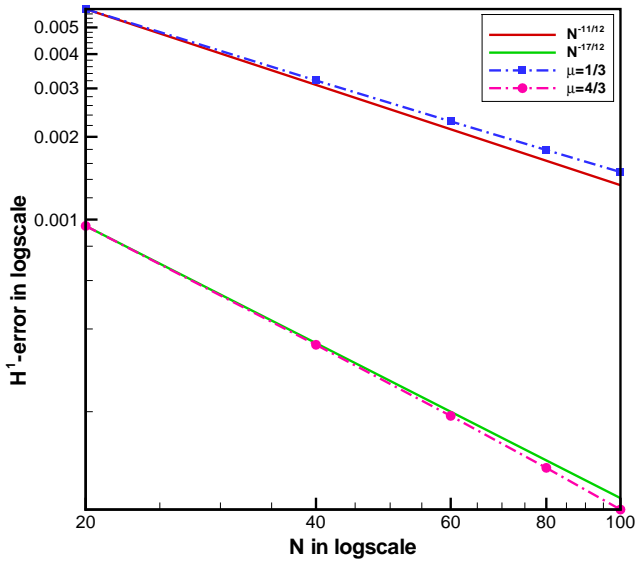}
\caption{Example 3: Convergence of the $H^1$ errors for the solutions.}
\label{figadd}
\end{figure}

\textbf{Example 4}. Wave propagation within homogeneous medium.
In this test, we present the problem describing the wave propagation with the homogeneous medium. For the diffusive-viscous wave equation \eqref{eq1}, we set the parameters as $\alpha=1,\beta=0.01$ and $\gamma=20$. A Ricker wavelet with dominant frequency of 15Hz located at $(x_0,y_0)=(10,10)$ is used to generate the vibration. The source function is taken as follows
\begin{equation}\label{source1}
f(x,y,t)=g(x,y)h(t),
\end{equation}
where
\begin{equation}\label{source2}
g(x,y)={\rm e}^{-[(x-x_0)^2+(y-y_0)^2]},~~~~h(t)=\big[1-2(\pi f_0(t-t_0))^2\big]{\rm e}^{-(\pi f_0(t-t_0))^2}
\end{equation}
with the dominant frequency $f_0=15$ and the time delay $t_0=0.05$.

We use the present algorithm to numerically solve this model. The time step size is taken as $\Delta t=10^{-4}$ and degrees of the space approximation is $N\times N$. We show the time evolution of the
diffusive-viscous wave in Figure \ref{fig3}. {Observe that the wave propagates outward
isotropically from the source center $(x_0,y_0)$.}
We further compare the cross sections of the numerical solutions at the line $y=x$ for $T=0.005,0.1,0.3,0.5$ with $N=100$ and $N=200$ in Figure \ref{fig4} to verify the convergence of our numerical method. It can be seen that the one-dimensional profiles match very well. Moreover, we notice that the wave front has propagated out of the fixed domain $[0,20]^2$ (here only for showing the solution) at the time $T=0.5$. This means that we cannot obtain accurate solutions for a long time within a fixed bounded domain, e.g. $[0,20]^2$, if homogeneous boundary conditions are used.
\begin{figure}[H]
\begin{minipage}{0.5\textwidth}
\centering
\includegraphics[width=3.2in]{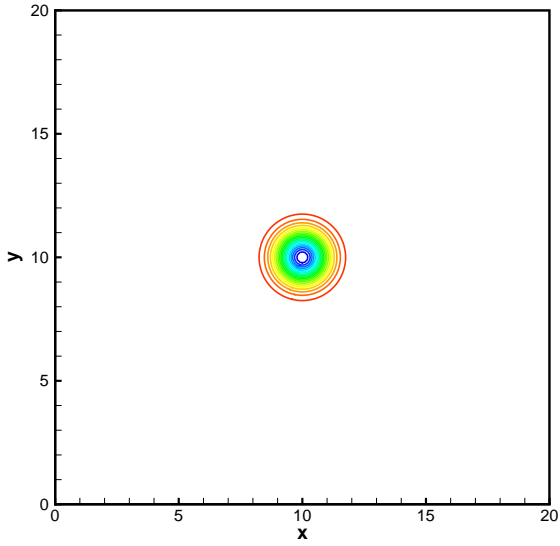}
\caption*{(a) $T=0.005$}
\end{minipage}
\begin{minipage}{0.5\textwidth}
\centering
\includegraphics[width=3.2in]{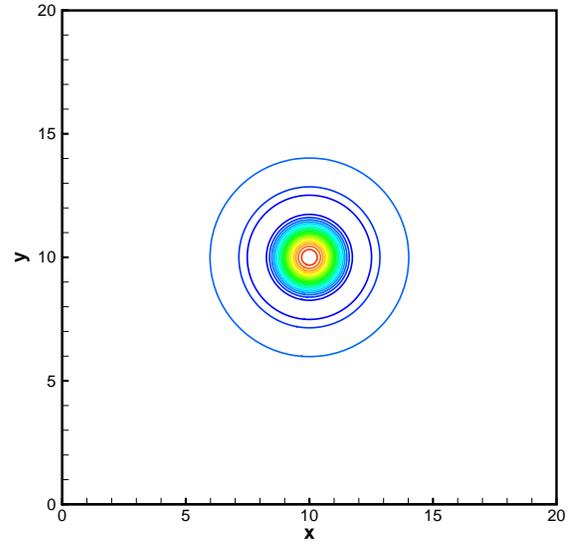}
\caption*{(b) $T=0.1$}
\end{minipage}
\begin{minipage}{0.5\textwidth}
\centering
\includegraphics[width=3.2in]{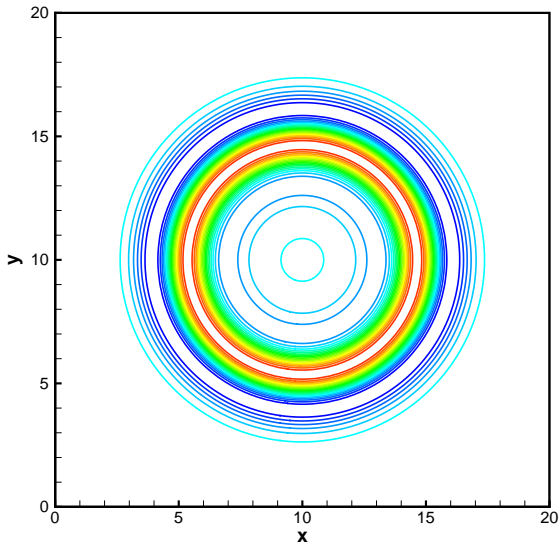}
\caption*{(c) $T=0.3$}
\end{minipage}
\begin{minipage}{0.5\textwidth}
\centering
\includegraphics[width=3.2in]{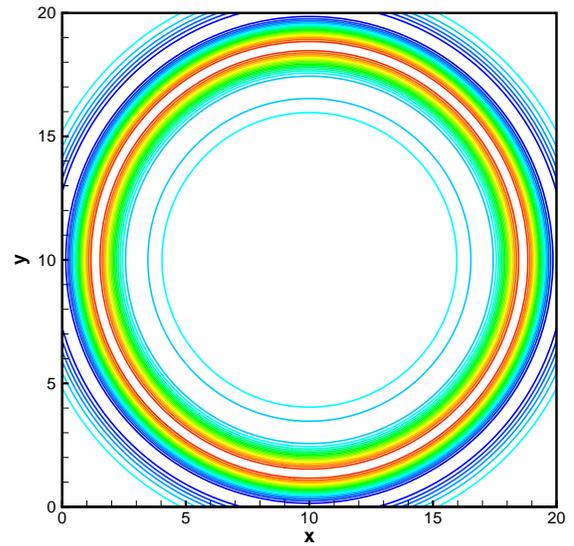}
\caption*{(d) $T=0.5$}
\end{minipage}
\caption{Example 4: Contours of the numerical approximations at time $T=0.005,0.1,0.3,0.5$ with $N=200$.}
\label{fig3}
\end{figure}
\begin{figure}[H]
\begin{minipage}{0.5\textwidth}
\centering
\includegraphics[width=3.0in]{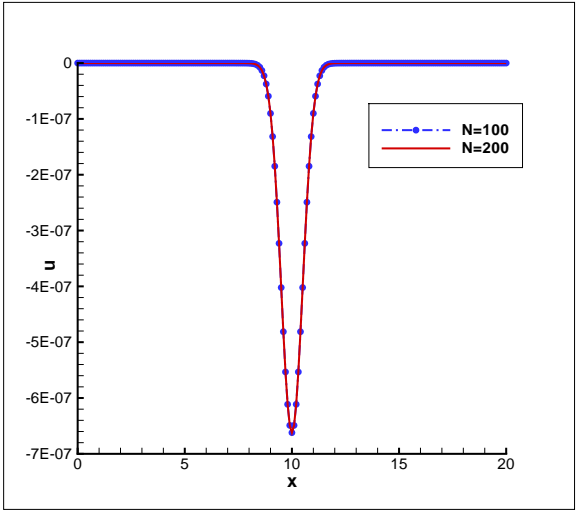}
\caption*{(a) $T=0.005$}
\end{minipage}
\begin{minipage}{0.5\textwidth}
\centering
\includegraphics[width=3.0in]{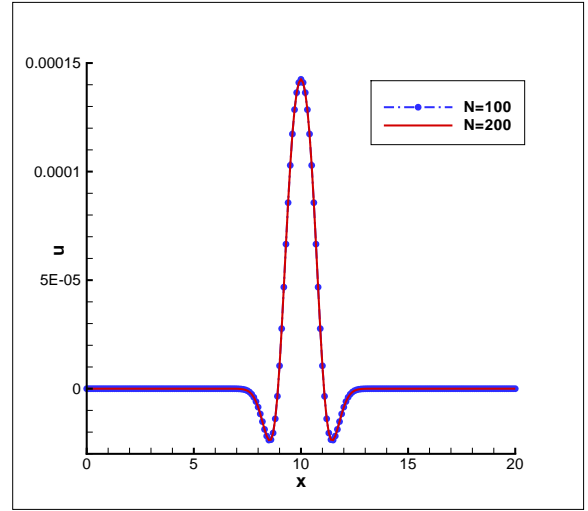}
\caption*{(b) $T=0.1$}
\end{minipage}
\begin{minipage}{0.5\textwidth}
\centering
\includegraphics[width=3.0in]{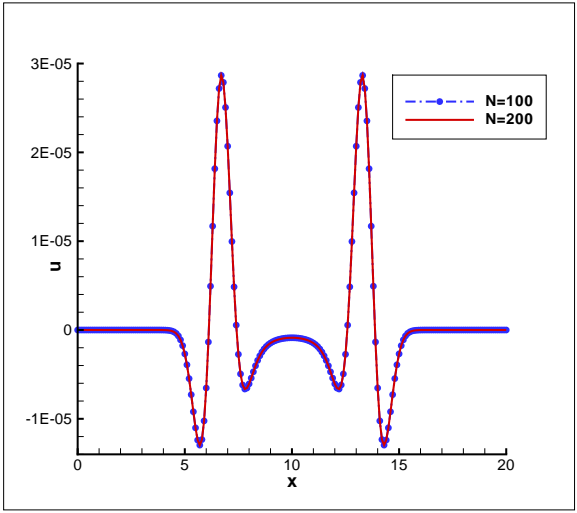}
\caption*{(c) $T=0.3$}
\end{minipage}
\begin{minipage}{0.5\textwidth}
\centering
\includegraphics[width=3.0in]{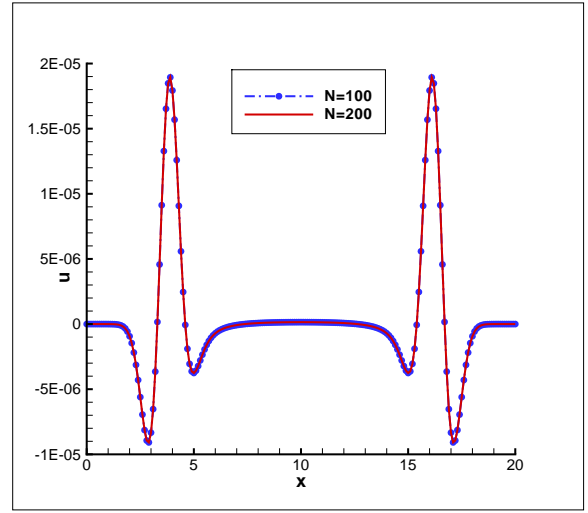}
\caption*{(d) $T=0.5$}
\end{minipage}
\caption{Example 4: Comparison of the cross sections of the numerical solutions at $y=x$,
at $T=0.005,0.1,0.3,0.5$. The results corresponding to red solid lines are obtained by using $N=200$ while the results corresponding to blue
dash-dotted lines with the circle symbol are obtained by using $N=100$.}
\label{fig4}
\end{figure}

\textbf{Example 5}. Wave propagation within heterogeneous media.
Now we consider the wave propagation within two different  media.  The parameters are set as
$$(\alpha,\beta,\gamma)=\left\{\begin{array}{ll}
(1.0,0.02,15.6), &~\text{if}~y\le 16.5,\\
(2.5,0.05,20.4), &~\text{if}~y>16.5.
\end{array}\right.$$
The source function is defined as that in \eqref{source1} and \eqref{source2} with
$(x_0,y_0)=(15,15),f_0=20$ and $t_0=0.05$.

The time step size is again taken as $\Delta t=10^{-4}$ and degrees of the space approximation is $N\times N$. We present the time
evolution of the diffusive-viscous wave in Figure \ref{fig5}. We observe that initially the wave propagates
isotropically from the source center $(x_0,y_0)$ until it reaches the interface of these different two media (around $T=0.05$ to 0.15).
Then at a later time the wave fronts propagate at different speeds within these two media. We also present the cross sections of the numerical solutions at the line $x=17$ for $T=0.05,0.15,0.25,0.4$ with $N=150$ and $N=300$ in Figure \ref{fig6} to verify the convergence of our numerical method. It can be seen that the one-dimensional profiles match very well. Again, it can be observed that the wave fronts have propagated out of the given domain $[0,30]^2$ at the time $T=0.8$. For this case, suitable boundary conditions must be applied when the problem is simulated in a bounded domain, otherwise truncation errors  or boundary reflections may destroy the numerical solutions. However, this issue is resolved by using the proposed method since we directly simulate the model in natural unbounded domains.
\begin{figure}[H]
\begin{minipage}{0.5\textwidth}
\centering
\includegraphics[width=3.6in]{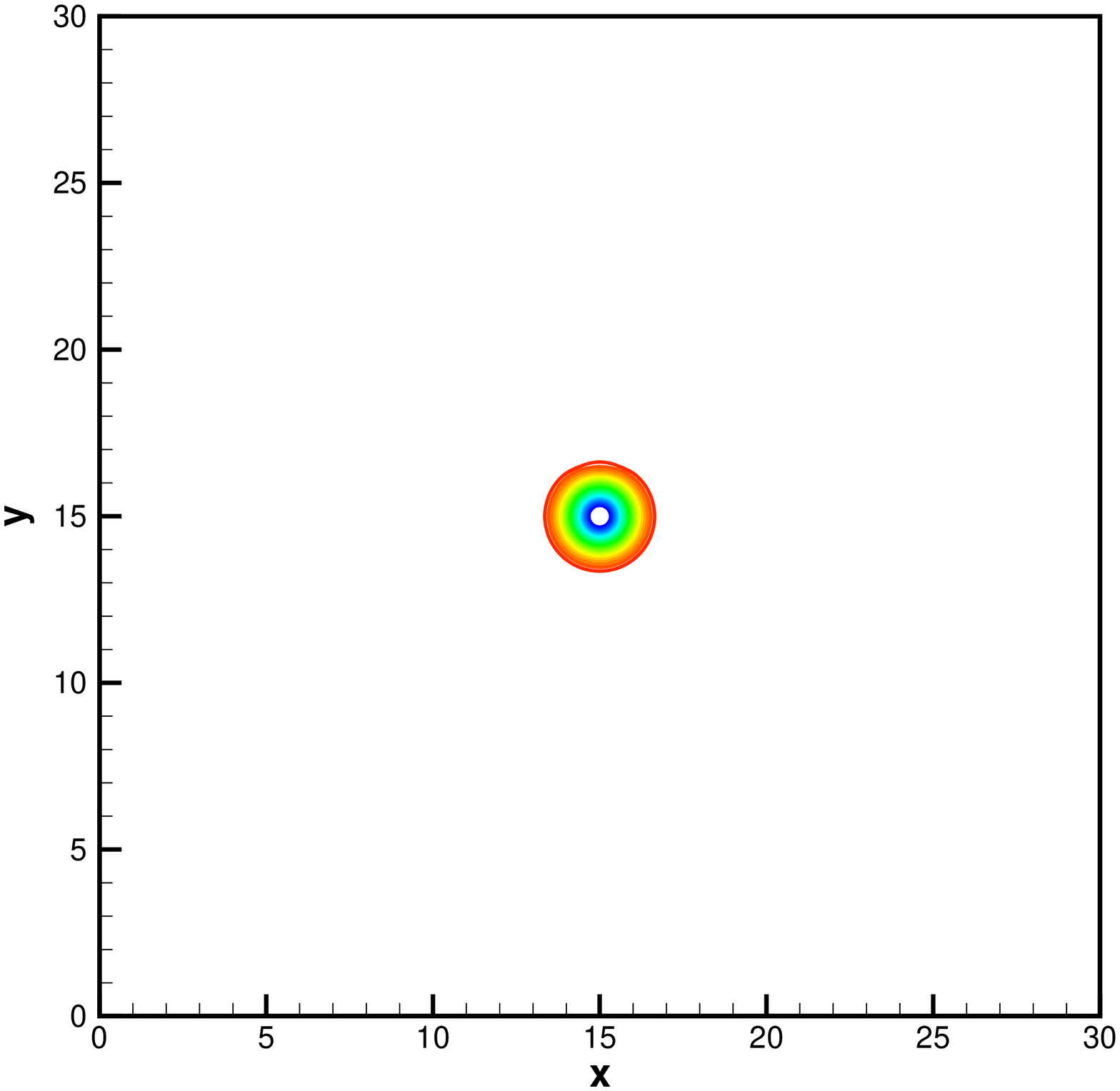}
\caption*{(a) $T=0.05$}
\end{minipage}
\begin{minipage}{0.5\textwidth}
\centering
\includegraphics[width=3.6in]{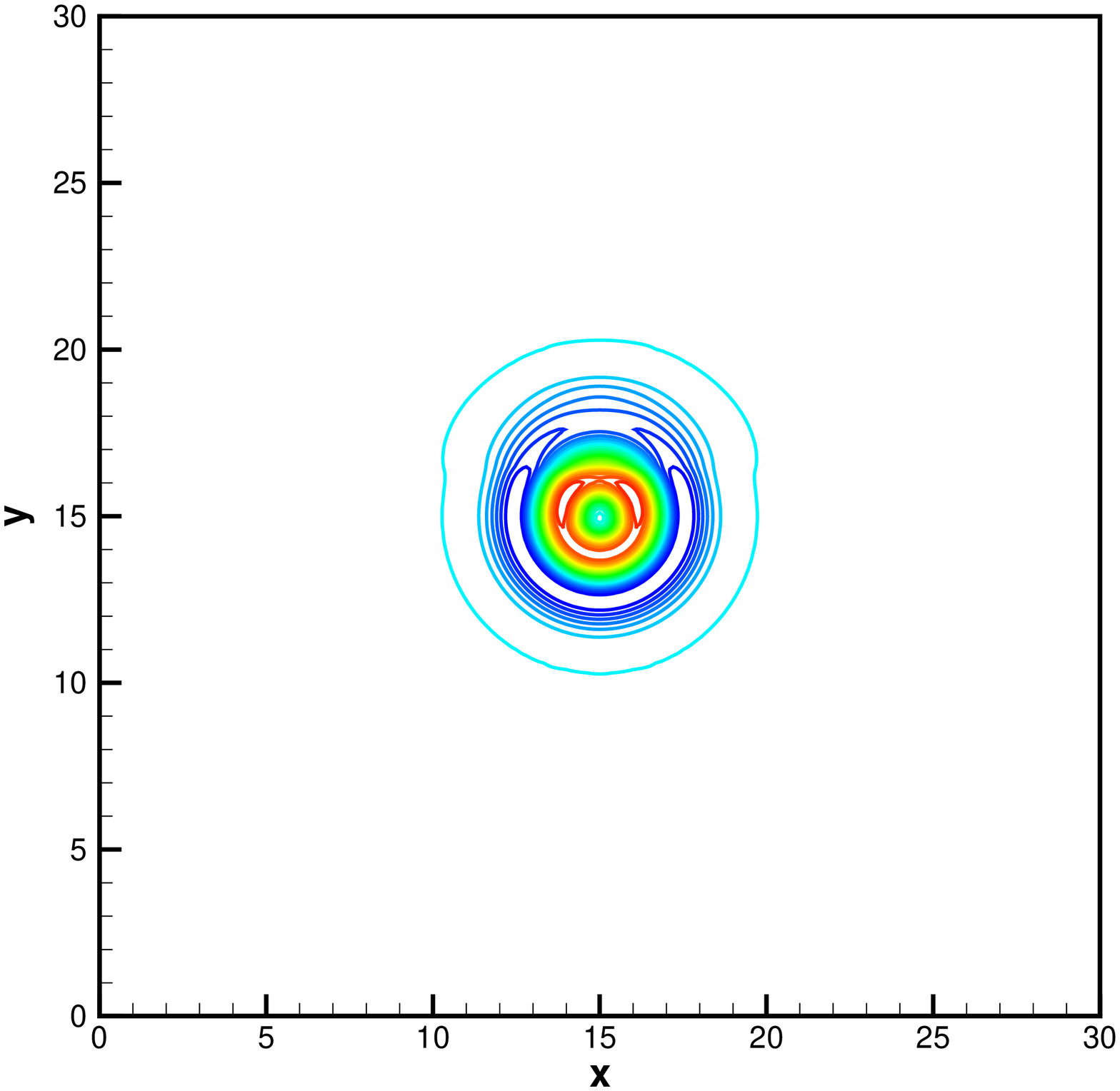}
\caption*{(b) $T=0.15$}
\end{minipage}
\end{figure}
\begin{figure}[H]
\begin{minipage}{0.5\textwidth}
\centering
\includegraphics[width=3.6in]{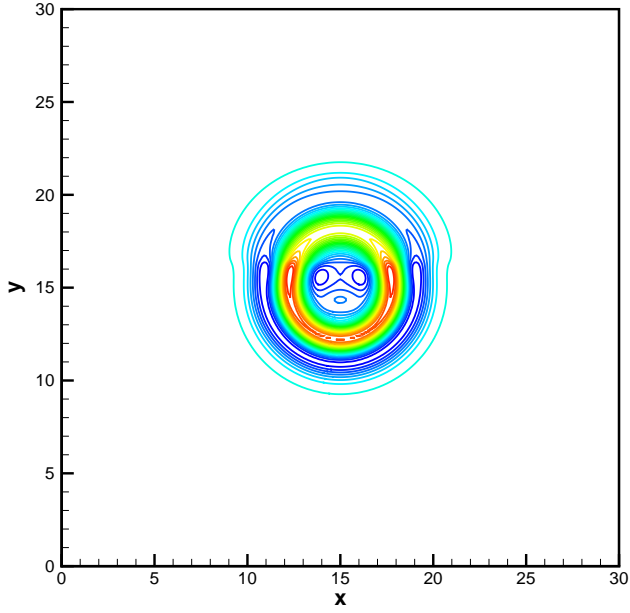}
\caption*{(c) $T=0.25$}
\end{minipage}
\begin{minipage}{0.5\textwidth}
\centering
\includegraphics[width=3.6in]{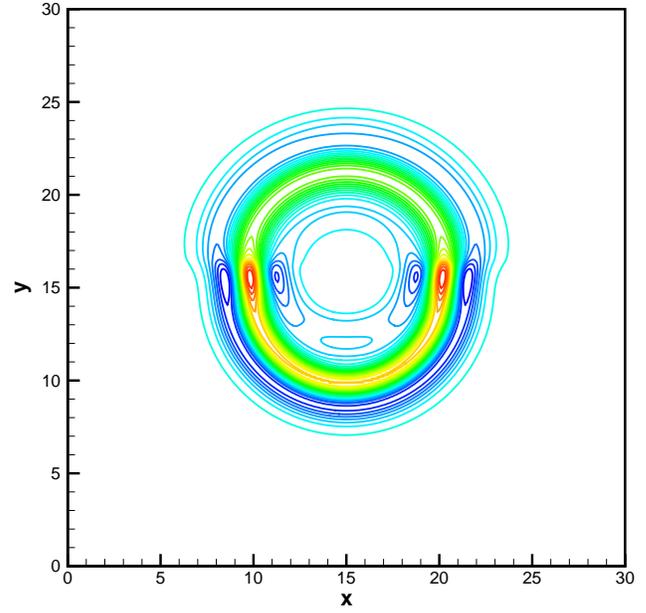}
\caption*{(d) $T=0.4$}
\end{minipage}
\begin{minipage}{0.5\textwidth}
\centering
\includegraphics[width=3.6in]{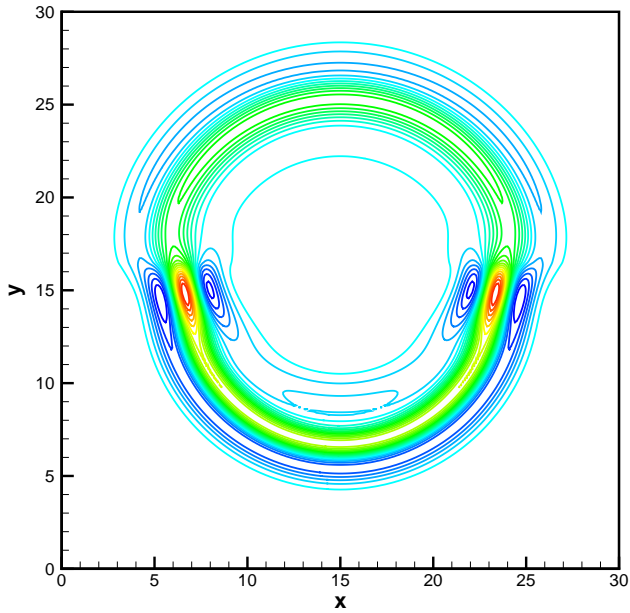}
\caption*{(e) $T=0.6$}
\end{minipage}
\begin{minipage}{0.5\textwidth}
\centering
\includegraphics[width=3.6in]{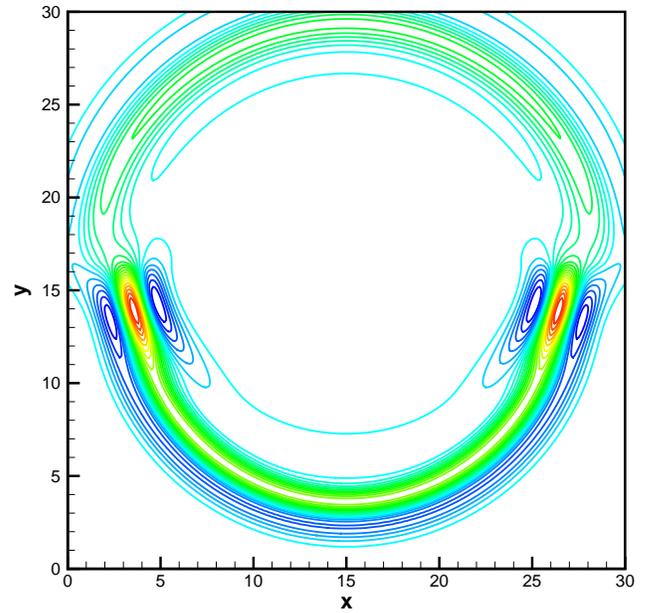}
\caption*{(f) $T=0.8$}
\end{minipage}
\caption{Example 5: Contours of the numerical approximations at time $T=0.05,0.15,0.25,0.4,0.6,0.8$ with $N=300$.}
\label{fig5}
\end{figure}
\begin{figure}[H]
\begin{minipage}{0.5\textwidth}
\centering
\includegraphics[width=2.6in]{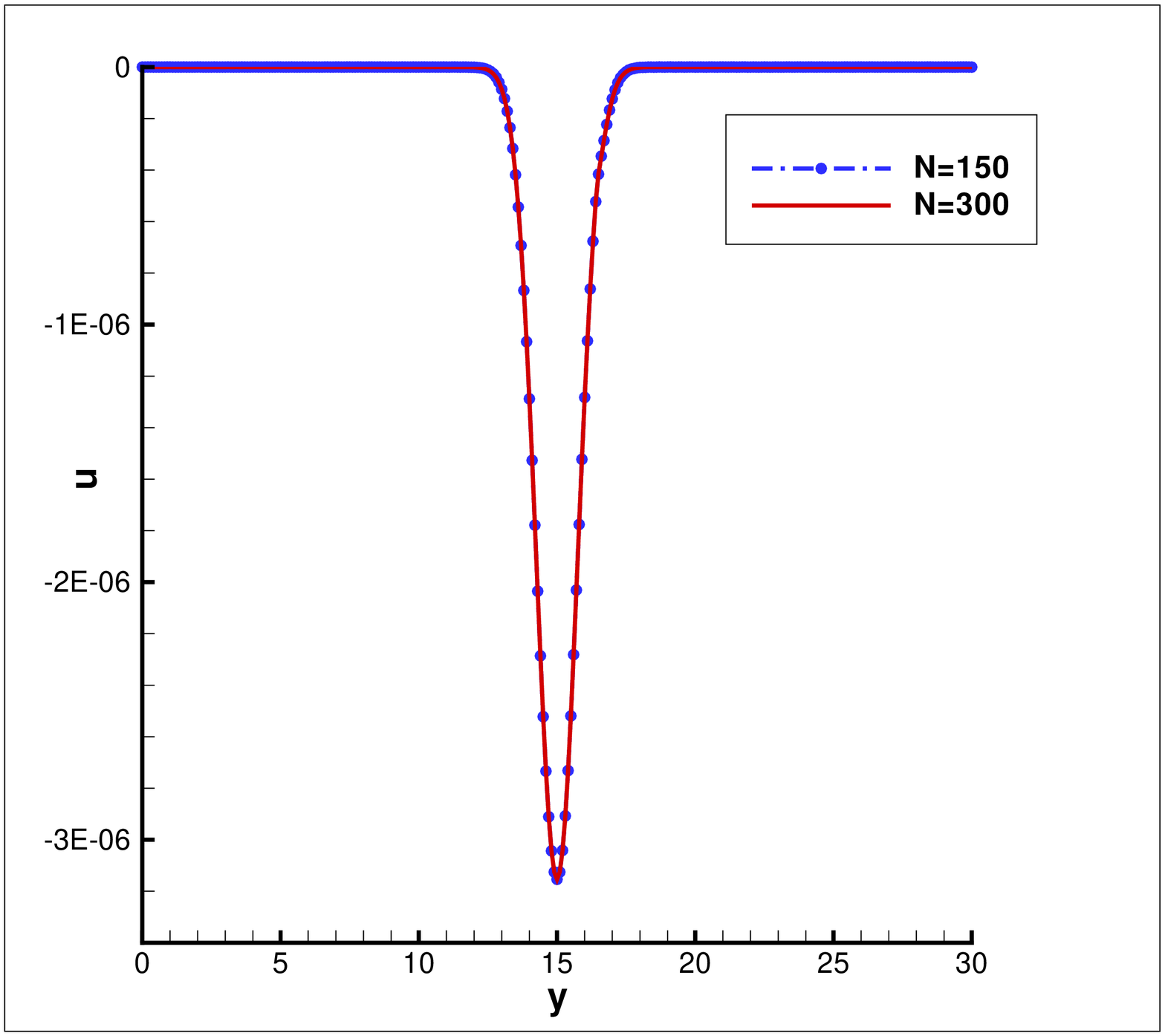}
\caption*{(a) $T=0.05$}
\end{minipage}
\begin{minipage}{0.5\textwidth}
\centering
\includegraphics[width=2.6in]{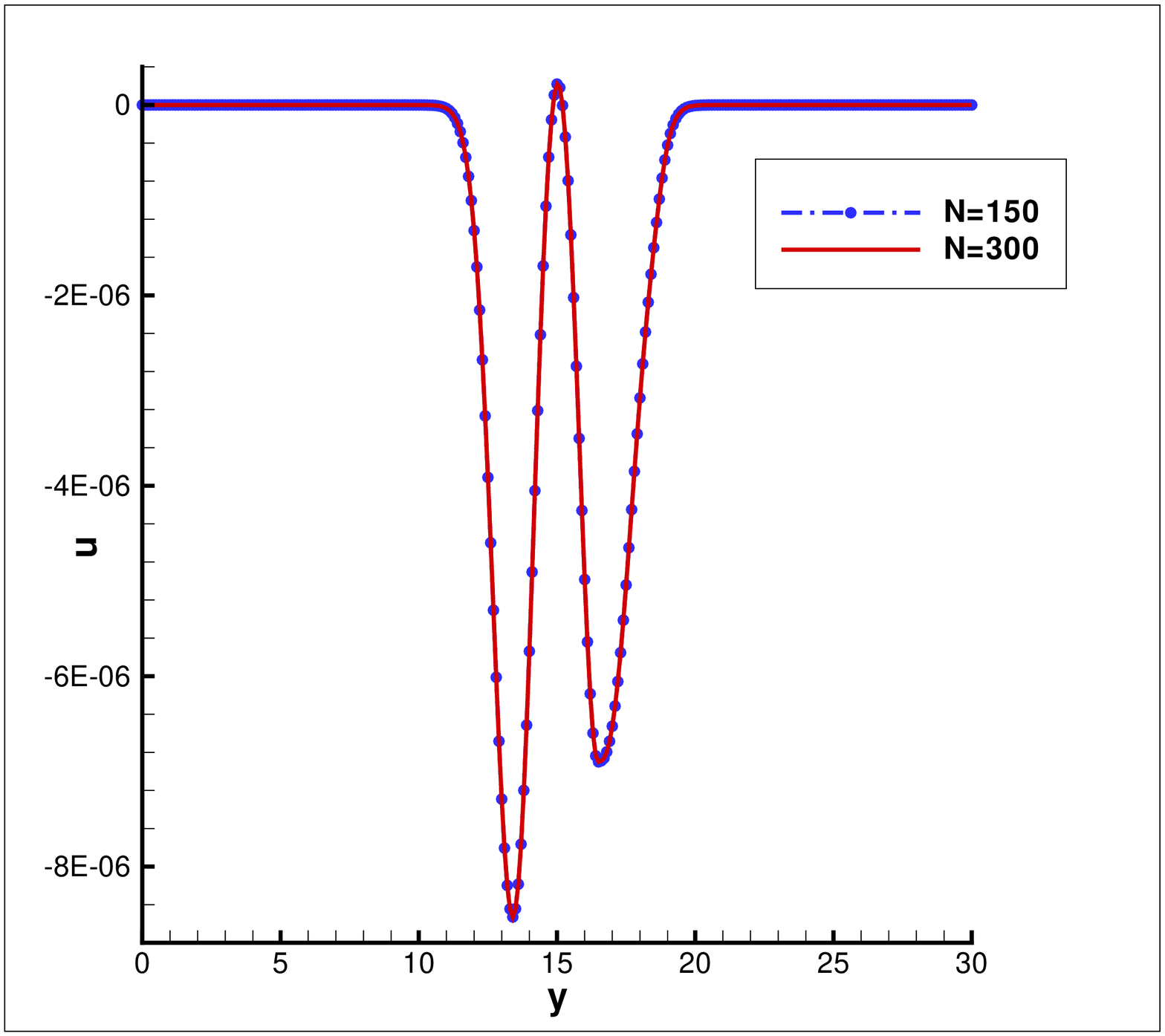}
\caption*{(b) $T=0.15$}
\end{minipage}
\begin{minipage}{0.5\textwidth}
\centering
\includegraphics[width=2.6in]{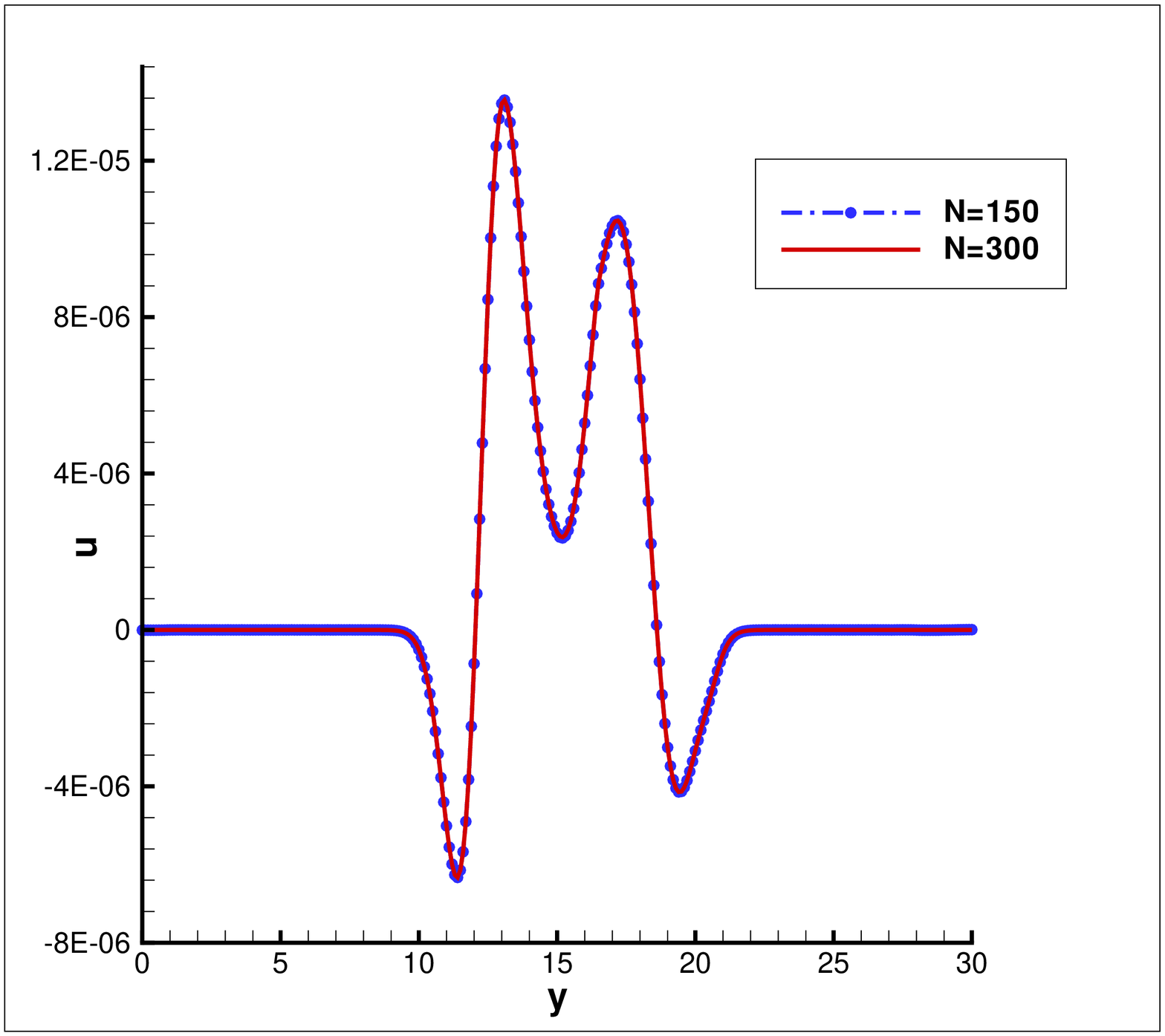}
\caption*{(c) $T=0.25$}
\end{minipage}
\begin{minipage}{0.5\textwidth}
\centering
\includegraphics[width=2.6in]{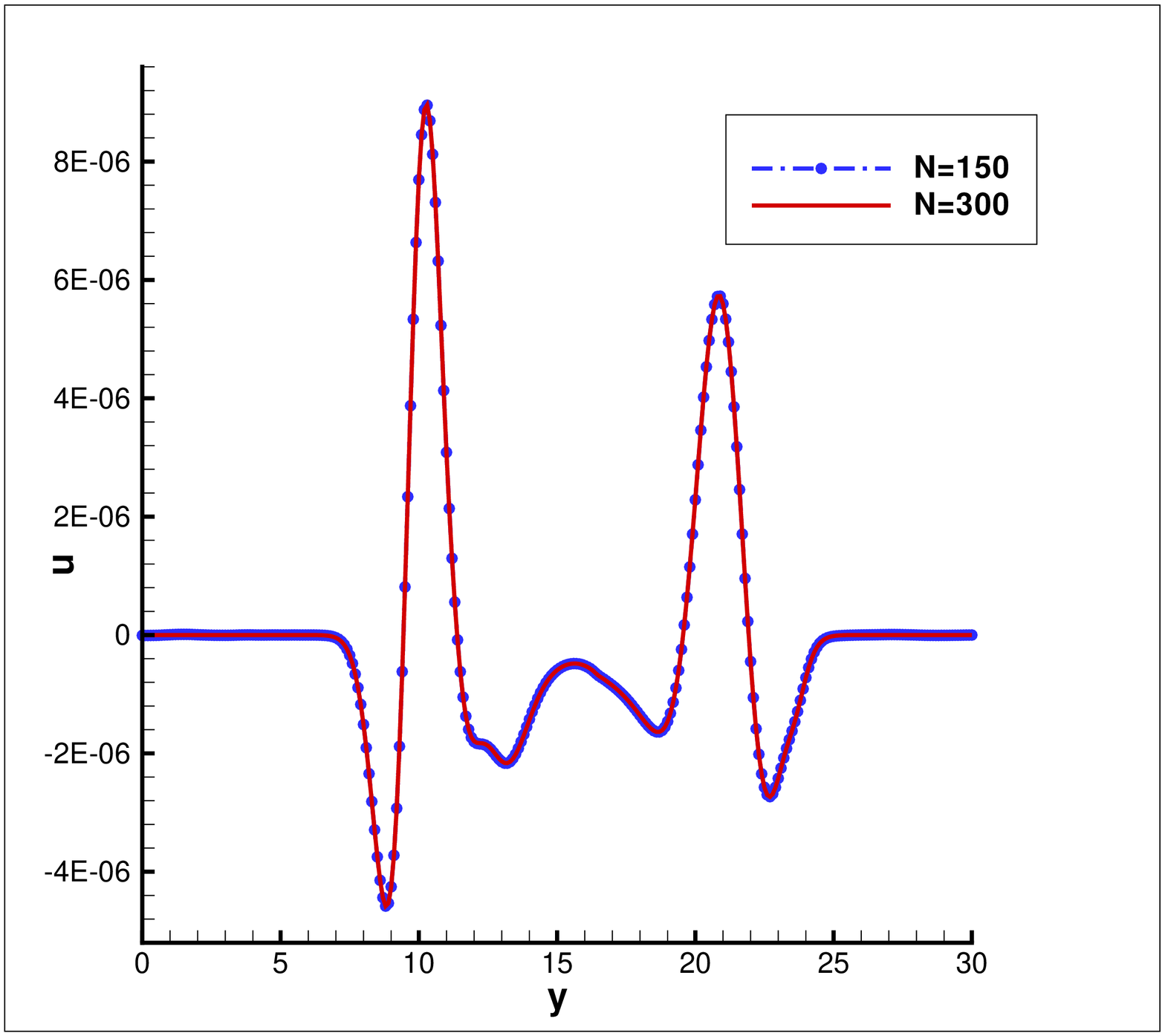}
\caption*{(d) $T=0.4$}
\end{minipage}
\begin{minipage}{0.5\textwidth}
\centering
\includegraphics[width=2.6in]{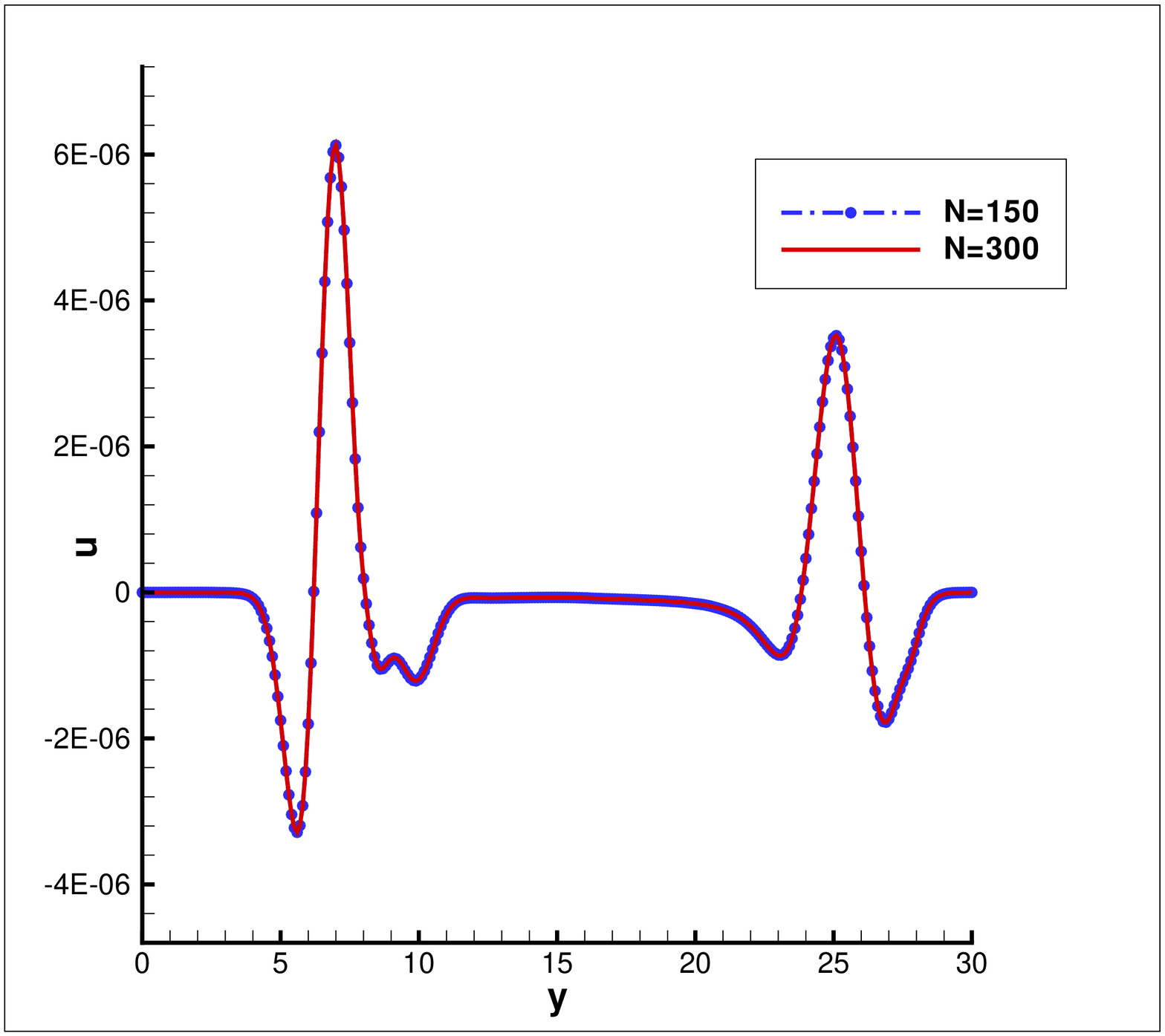}
\caption*{(e) $T=0.6$}
\end{minipage}
\begin{minipage}{0.5\textwidth}
\centering
\includegraphics[width=2.6in]{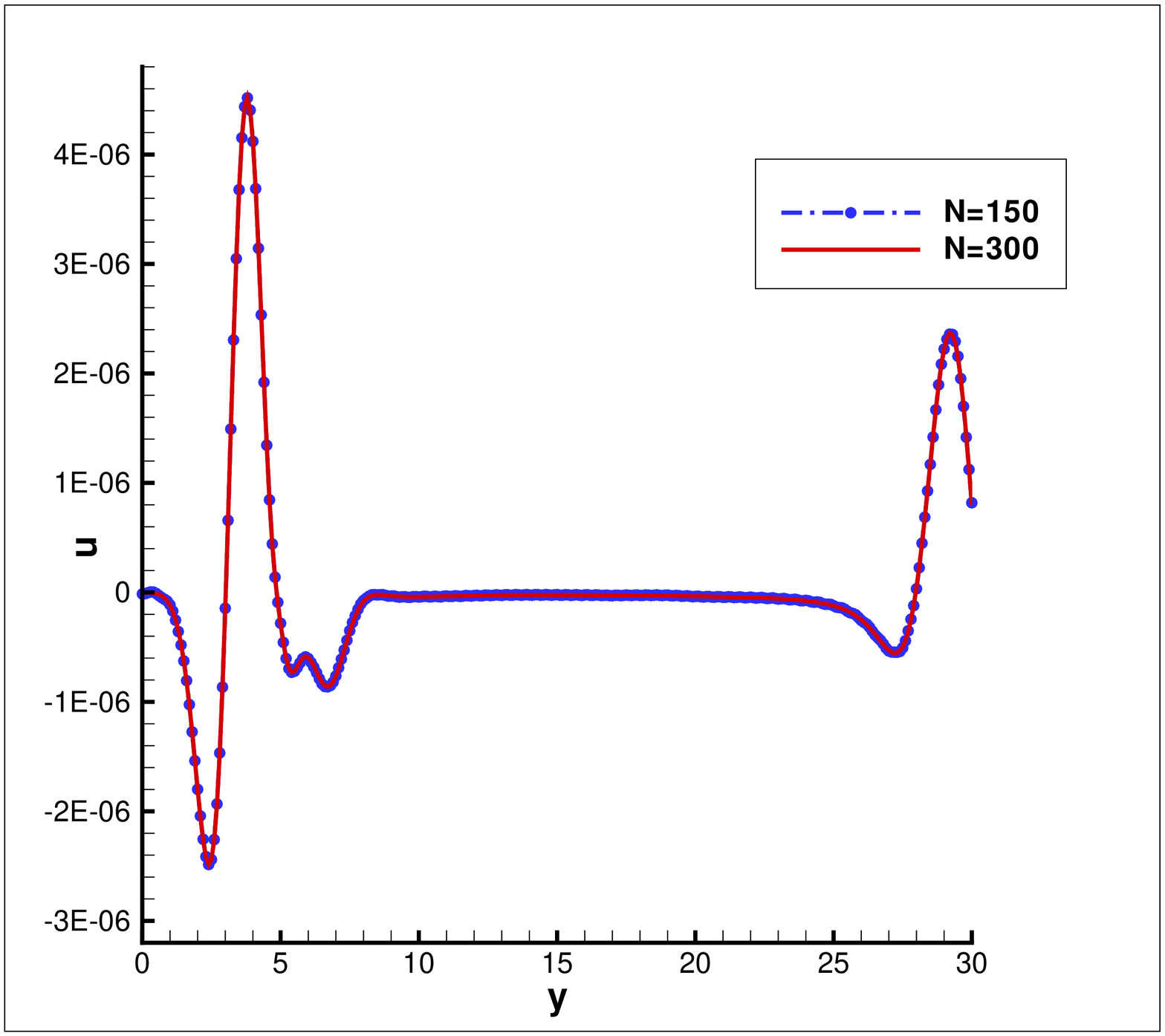}
\caption*{(f) $T=0.8$}
\end{minipage}
\caption{Example 5: Comparison of the cross sections of the numerical solutions with $x=17$,
at $T=0.05,0.15,0.25,0.4,0.6,0.8$. The results corresponding to red solid lines are obtained by using $N=300$ while the results corresponding to blue
dash-dotted lines with the circle symbol are obtained by using $N=150$.}
\label{fig6}
\end{figure}

\section{Concluding remarks}
Diffusive-viscous wave equations arising in geophysics are naturally developed in unbounded domains. A truncated domain is usually needed to numerically solve the diffusive-viscous wave equations. However, this introduces nonphysical reflections or truncation errors. To resolve this issue, we directly consider diffusive-viscous wave equations in unbounded domains in this paper. In particular, we analyzed the existence and uniqueness of the weak solution and show the regularity in terms of the initial conditions and the source term. We further developed a high accuracy Hermite spectral Galerkin scheme for diffusive-viscous wave equations, and then derived the error estimate for the Hermite spectral Galerkin method. We demonstrated the theoretical result and verified the sharpness of the error estimate using both smooth and non-smooth functions $f$. We further provided several numerical examples with constant as well as discontinuous coefficients to demonstrate the present algorithm showing that the present method can resolve the boundary truncation and artificial reflection issues.

\end{document}